\documentclass{amsart}
\usepackage{amsmath, amscd, amsthm}
\usepackage{latexsym, amssymb, amsfonts}
\usepackage{url}
\usepackage{color}
\usepackage{graphicx}
\usepackage{hyperref}
\hypersetup{
    colorlinks=false,
    linkcolor=blue,
    filecolor=magenta,      
    urlcolor=cyan,
    pdftitle={Density of subsets of squarefree elements in certain Dedekind domains},
    pdfpagemode=FullScreen,
}

\usepackage{stmaryrd}
\usepackage{geometry}\usepackage{tikz}\usetikzlibrary{positioning}
\usepackage{float}
\usepackage{subcaption}

\numberwithin{equation}{section}

\def\Bigroman{\uppercase\expandafter{\romannumeral\number\count 255 }}
\def\Romannumeral{\afterassignment\BigRoman\count255=}

\DeclareMathOperator{\bbF}{\mathbb{F}}
\DeclareMathOperator{\bbN}{\mathbb{N}}
\DeclareMathOperator{\bbZ}{\mathbb{Z}}
\DeclareMathOperator{\bbQ}{\mathbb{Q}}
\DeclareMathOperator{\bbR}{\mathbb{R}}
\DeclareMathOperator{\bbI}{\mathbb{I}}
\DeclareMathOperator{\bbS}{\mathbb{S}}

\DeclareMathOperator{\cP}{\mathcal{P}}
\DeclareMathOperator{\cO}{\mathcal{O}}

\DeclareMathOperator{\cM}{\mathcal{M}}

\theoremstyle{plain}
\newtheorem{theorem}{Theorem}[section] 
\newtheorem{proposition}[theorem]{Proposition}
\newtheorem{lemma}[theorem]{Lemma}

\theoremstyle{definition}
\newtheorem{definition}[theorem]{Definition}

\theoremstyle{remark}
\newtheorem{remark}[theorem]{Remark}

\title{Density of subsets of squarefree elements in certain Dedekind domains}
\author{Eunju Shin}
\address{Department of Mathematics, The Ohio State University, 231 West 18th Avenue, Columbus, OH 43210-1174}
\email{shin.970@osu.edu}
\usepackage{lipsum}

\begin{document}

\maketitle

\begin{abstract}
    We consider polynomial rings over finite fields and rings of integers of imaginary quadratic fields $\mathbb{Q}(\sqrt{-D})$. In this paper, we formulate the density of squarefree elements divisible by all elements of $T$ but by none of $P$, where $T$ and $P$ are subsets of squarefree elements and $T$ is finite. We also define Mersenne irreducibles in order to estimate the density of squarefree elements divisible by none of $P$.
\end{abstract}

\section{Introduction}
Squarefree numbers have long been among the most studied and significant objects in number theory. It is well-known that the natural density of the set $\bbS_{\bbN}$ of squarefree numbers is
\begin{align*}
    \frac{1}{\zeta(2)}=\frac{6}{\pi^2}\text{\cite{ha}}.
\end{align*}
Jameson showed that the natural density of odd squarefree numbers is
\begin{align*}
    \frac{4}{\pi^2}\text{\cite{j}}.
\end{align*}
Brown generalized this result by obtaining a formula for the natural density of the set of squarefree numbers divisible by all elements of $T$ but by none of $P$:
\begin{align*}
    \frac{6}{\pi^2}\prod_{p\in T}\frac{1}{1+p}\prod_{p\in P}\frac{p}{1+p},
\end{align*}
where $T,P$ are subsets of $\bbS_{\bbN}$ and $T$ is finite\cite{b}.

A natural question is whether similar formulas can be obtained over other rings, especially Dedekind domains.

\subsection{Background and Definitions}
We are concerned with polynomial rings over finite fields and certain rings of integers, both of which are well known examples of Dedekind domains. 

The ring of integers $\cO_{\bbQ(\sqrt{-D})}$ of the imaginary quadratic field $\bbQ(\sqrt{-D})$, with $D\in\bbN$: 
    \begin{align*}
    \cO_{\bbQ(\sqrt{-D})}=\begin{cases}
        \bbZ[\sqrt{-D}]&\text{if $D\not\equiv 3\ (\operatorname{mod}4)$};\\
        \bbZ\left[\frac{1+\sqrt{-D}}{2}\right]&\text{if $D\equiv 3\ (\operatorname{mod}4)$}.
    \end{cases}
\end{align*}
   Up to isomorphism,  $\Bbbk=\bbQ(\sqrt{-D})$ are essentially the only quadratic number fields whose rings of integers have finitely many units. For simplicity, we write $\cO_D=\cO_{\bbQ(\sqrt{-D})}$.

   For a Dedekind domain $R$, we define
\begin{align}\label{norm}
    N:R\to\bbZ_{\geq0},\ N(x):=\begin{cases}
        |R/xR|&\text{if $x\neq0$};\\
        0&\text{if $x=0$},
    \end{cases}
\end{align}
where $|R/xR|$ denotes the number of cosets $r+xR$ in $R$. Then $N$ is multiplicative\cite{nj}. If $R=\bbF_q[t]$, then the map \ref{f} is explicitly given by
\begin{align*}
    N:\bbF_q[t]\to\bbZ_{\geq0},\ N(x)=q^{\deg(x)}.
\end{align*}
If $R=\cO_D$, then the map \ref{f} is explicitly given by
\begin{align*}
    N:\cO_D\to\bbZ_{\geq0},\ N(a+\sqrt{-D}b)=a^2+Db^2.
\end{align*}
\begin{definition}[density] Let $R=\bbF_q[t],\cO_D$ and $A\subseteq R$. For $n\in\bbN$, let 
\begin{align}\label{f}
    F_n=\{x\in R:N(x)<n\}.
\end{align}
The \emph{lower} and \emph{upper densities} of $A$ are defined by
    \begin{align*}
        \underline{d}(A):=\liminf_{n\to\infty}\frac{|A\cap F_n|}{|F_n|}\quad \text{and}\quad
        \overline{d}(A):=\limsup_{n\to\infty}\frac{|A\cap F_n|}{|F_n|}.
    \end{align*}
    If $\underline{d}(A)=\overline{d}(A)$, then $A$ is said to have \emph{density} $d(A)=\underline{d}(A)=\overline{d}(A)$.
\end{definition}
Indeed, $F_n$ forms a F\o lner sequence in $R=\bbF_q[t],\cO_D$, so $\underline{d}$ and $\overline{d}$ are well-defined.

\begin{proposition}\label{sqf}
    One has 
    \begin{align*}
        d(\bbS_{\bbF_q[t]})=\frac{1}{\zeta_{\bbF_q[t]}(2)}&=1-\frac{1}{q}\text{\cite{r}};\\
        d(\bbS_{\cO_D})=\frac{1}{\zeta_{\cO_D}(2)}\text{\cite{cv}},
    \end{align*}
    where $\zeta_R$ is the Dedekind zeta function.
\end{proposition}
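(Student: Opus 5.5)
The plan is a Möbius-inversion count. In both rings an element is squarefree exactly when no square of a prime divides it, so I will count elements of $F_n$ divisible by $\pi^2$ for sets of primes $\pi$ and sieve these out, with $\mu$ the Möbius function on ideals. Since units are finite in number and a nonzero ideal of norm $m$ is generated by exactly $|R^\times|$ elements, I can sum over ideals in place of elements. Working with ideals,
\begin{align*}
|\bbS_R\cap F_n| = \sum_{\mathfrak a} \mu(\mathfrak a)\, \bigl|\{x\in F_n : \mathfrak a^2 \mid x\}\bigr|,
\end{align*}
where the sum runs over nonzero ideals $\mathfrak a$ with $N(\mathfrak a)^2<n$; for larger $\mathfrak a$ the inner count is just $\{0\}$, which is negligible. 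The key local input is that the sublattice $\mathfrak a^2\subseteq R$ has index $N(\mathfrak a)^2$. Hence
\begin{align*}
\bigl|\{x\in F_n:\mathfrak a^2\mid x\}\bigr| = \frac{|F_n|}{N(\mathfrak a)^2}+E(n,\mathfrak a).
\end{align*}

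For $R=\bbF_q[t]$ this is exact and elementary. Take $n=q^k$. Then $F_n$ is the set of polynomials of degree less than $k$, which has size $q^k$. Multiples of a monic $g$ with $\deg g^2\le k$ in $F_n$ number exactly $q^{k-2\deg g}$, so $E=0$. This gives
\begin{align*}
\frac{|\bbS\cap F_n|}{|F_n|}=\sum_{\deg g\le k/2}\frac{\mu(g)}{q^{2\deg g}},
\end{align*}
where the sum is over monic $g$ and the scalar multiple is absorbed into the count. This converges to $1/\zeta_{\bbF_q[t]}(2)=1-q\cdot q^{-2}=1-1/q$, since $\zeta_{\bbF_q[t]}(s)=(1-q^{1-s})^{-1}$. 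Note that $F_n$ only changes at powers of $q$, so for $q^{k-1}<n\le q^k$ we have $F_n=F_{q^k}$ and the limit exists along all $n$.

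For $\cO_D$, the set $F_n$ is the set of lattice points in an ellipse of area $\asymp n$. The count of points of the sublattice $\mathfrak a^2$, whose covolume is $N(\mathfrak a)^2$ times that of $\cO_D$, in this ellipse is $|F_n|/N(\mathfrak a)^2+O(\sqrt n/\sqrt{N(\mathfrak a)^2}+1)$. This is the standard boundary estimate, using that the lattice $\mathfrak a^2$ has minimal vector of length $\gg N(\mathfrak a)$ because it is an ideal of norm $N(\mathfrak a)^2$. Summing over $N(\mathfrak a)<\sqrt n$, the main terms give $|F_n|\sum\mu(\mathfrak a)N(\mathfrak a)^{-2}$. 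The tail of this series beyond $\sqrt n$ is $O(n^{-1/2})$, because the number of ideals of norm $m$ is $O(m^\varepsilon)$ and has bounded average.

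The main obstacle is controlling the accumulated error in the $\cO_D$ case. The error is
\begin{align*}
\sum_{N(\mathfrak a)<\sqrt n}\Bigl(\frac{\sqrt n}{N(\mathfrak a)}+1\Bigr)\ll \sqrt n\log n+\sqrt n,
\end{align*}
using that the number of ideals of norm at most $X$ is $O(X)$. This is $o(n)$, and dividing by $|F_n|\sim cn$ yields the density $\sum_{\mathfrak a}\mu(\mathfrak a)N(\mathfrak a)^{-2}=1/\zeta_{\cO_D}(2)$, by the Euler product of the Dedekind zeta function. The delicate point I would check carefully is the uniformity of the lattice-point error for the skewed lattices $\mathfrak a^2$. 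The fallback is a cruder bound $O(\sqrt n+1)$ per ideal, restricted to $N(\mathfrak a)\le n^{1/4}$, with the remaining range handled by the trivial bound $|\{x\in F_n:\mathfrak a^2\mid x\}|\ll n/N(\mathfrak a)^2+1$ summed over $N(\mathfrak a)>n^{1/4}$, which gives $O(n^{3/4})$.
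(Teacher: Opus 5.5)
The paper gives no argument for this proposition: the $\bbF_q[t]$ formula is quoted from \cite{r} and the $\cO_D$ formula from \cite{cv}. Your proposal takes a genuinely different, self-contained route (a Möbius sieve over ideals plus lattice-point counting), and it is correct.

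In $\bbF_q[t]$ your count over monic $g$ is in fact exact. For $k\ge 2$ the contributions of $0$ and of the $q-1$ constants cancel, since $\sum_{\deg g\le k/2}\mu(g)=1-q$. Your observation that $F_n=F_{q^k}$ for $q^{k-1}<n\le q^k$ then gives the limit along all $n$.

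In $\cO_D$ the point you call delicate is harmless. For any lattice $L\subset\bbR^2$, counting along lines parallel to a shortest vector shows that the number of points of $L$ in a disc of radius $r$ is $\pi r^2/\mathrm{covol}(L)+O(1+r/\lambda_1(L))$, with an absolute constant. Moreover $\lambda_1(\mathfrak a^2)\ge N(\mathfrak a)$, because $|x|^2=N(x)\ge N(\mathfrak a)^2$ for $0\ne x\in\mathfrak a^2$. Ideal lattices are even uniformly well-rounded, since $\lambda_1\lambda_2\asymp\mathrm{covol}(\mathfrak a^2)\asymp_D N(\mathfrak a)^2$. So the fallback is unnecessary, and you obtain $|\bbS_{\cO_D}\cap F_n|=|F_n|/\zeta_{\cO_D}(2)+O(\sqrt n\log n)$.

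Compared with the citations, your route buys an explicit rate of convergence using only the bound $\#\{\mathfrak a:N(\mathfrak a)\le X\}=O(X)$. The citations buy generality ($k$-free elements in arbitrary number fields, and ergodic information in \cite{cv}) at no cost to the paper.

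One caveat applies equally to the paper's statement. Your first sentence identifies squarefree elements with those $x$ such that no $\mathfrak p^2$ divides $(x)$. That matches the paper's definition (no square of an irreducible element divides $x$) for $\bbF_q[t]$ and for $\cO_D$ of class number one, but not otherwise. In $\bbZ[\sqrt{-5}]$ the element $2$ is irreducible, hence squarefree in the paper's sense, yet $(2)=\mathfrak p^2$ with $\mathfrak p=(2,1+\sqrt{-5})$. More generally, every $x$ with $(x)=\mathfrak p^2\mathfrak c$, where $\mathfrak c$ is squarefree and prime to $\mathfrak p$, is squarefree in the paper's sense. These $x$ form a positive-density set disjoint from the ideal-squarefree elements. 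So what you prove, like \cite{cv}, is the ideal-theoretic statement; under the paper's literal definition the lower density is strictly larger than $1/\zeta_{\cO_D}(2)$ once the class number exceeds one.

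Relatedly, your remark that every ideal of norm $m$ has exactly $|R^\times|$ generators fails for non-principal ideals. You never use it, though, since you count $x\in\mathfrak a^2$ directly.
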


\begin{definition}
Let $R$ be a Dedekind domain. Let $a\in R$ be nonzero and nonunit.
\begin{itemize}
    \item[(i)] $a$ is \emph{irreducible} if $a=bc$ implies either $b$ or $c$ is a unit in $R$.

    \item[(ii)] $a$ is \emph{squarefree} if $b^2\nmid a$ (i.e.,\ $a$ does not have $b^2$ as a factor) for any irreducible $b\in R$. 
\end{itemize}
\end{definition}
Let  $\bbS_R=\{p\in R:\text{$p$ is squarefree}\}$ and $\bbI_R=\{p\in R:\text{$p$ is irreducible}\}$. Here, we use $\mathbb{I}_R$, instead of the set of primes, since $R$ may have more than one unit, and primality is not well behaved under multiplication by units.

\subsection{Main theorems}
Henceforth, $R=\mathbb{F}_q[t],\cO_D$.
\begin{theorem}\label{1}
    Let $P,T\subseteq\bbI_R$ be disjoint, with $T$ finite. Then the portion of all in $\bbS_R$ which are divisible by all the irreducibles in $T$ and by none of the irreducibles in $P$ is
\begin{align*}
    d(\bbS_R)\prod_{t\in T}\frac{1}{1+N(t)}\prod_{p\in P}\frac{N(p)}{1+N(p)}.
\end{align*}
\end{theorem}
\begin{theorem}\label{2}
    Let $r,m\in\bbI_R$ be relatively prime. Then the set of squarefree elements divisible by no prime congruent to $r$ modulo $m$ has zero density.
\end{theorem}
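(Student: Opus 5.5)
We would deduce Theorem \ref{2} from Theorem \ref{1}, applied with $T=\emptyset$ and $P$ the set of irreducibles $p$ with $p\equiv r \pmod m$ (or, since $R$ has finitely many units, $p$ associate to such an element). The set $A$ in question is contained in the set $A_P$ of squarefree elements divisible by none of the irreducibles in $P$. Theorem \ref{1} is stated for arbitrary $P$, but to avoid any convergence issue we will only apply it to finite truncations.

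For finite $P_0\subseteq P$ we have $A\subseteq A_{P_0}$. Theorem \ref{1} then gives
\begin{align*}
\overline{d}(A)\le d(A_{P_0})=d(\bbS_R)\prod_{p\in P_0}\frac{N(p)}{1+N(p)}=d(\bbS_R)\prod_{p\in P_0}\Bigl(1-\frac{1}{1+N(p)}\Bigr).
\end{align*}
The product tends to $0$ as $P_0$ increases through finite subsets exactly when $\sum_{p\in P}\frac{1}{1+N(p)}=\infty$, which is equivalent to $\sum_{p\in P}N(p)^{-1}=\infty$. Once this is established, letting $P_0$ exhaust $P$ yields $\overline{d}(A)=0$, hence $d(A)=0$.

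The main obstacle is therefore the divergence of $\sum_{p\equiv r \,(m)}N(p)^{-1}$, a Dirichlet-type theorem for $R$. For $R=\bbF_q[t]$ this is the polynomial Dirichlet theorem. The number of monic irreducibles of degree $n$ congruent to $r$ modulo $m$ is asymptotically $\frac{q^n}{n\,\Phi(m)}$, where $\Phi(m)=|(R/mR)^\times|$. The contribution of degree $n$ is then about $\frac{1}{n\Phi(m)}$, so the sum diverges like a harmonic series.

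For $R=\cO_D$, congruence conditions modulo $m$ single out primes lying in a ray class modulo $(m)$, restricted to principal ideals. Principal prime ideals with a generator in a fixed residue class modulo $m$ form a union of classes of the ray class group of modulus $(m)$. Hecke's generalization of Dirichlet's theorem, via nonvanishing of Hecke $L$-functions at $s=1$, gives $\sum N(\mathfrak p)^{-1}=\infty$ over each ray class. Finitely many units only change counts by a bounded factor. We would cite this rather than reprove it. If a more elementary route is desired, I would first try reducing to rational primes: choose a residue class of rational primes that split in $\bbQ(\sqrt{-D})$ with a generator $\equiv r$, using Dirichlet over $\bbZ$. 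That reduction is delicate, however, so citing Hecke is the plan.

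Finally, the phrase "prime congruent to $r$ modulo $m$" should be interpreted as irreducibles up to units. The hypothesis that $r,m$ are relatively prime ensures that the class is a unit class modulo $m$, and hence that it contains infinitely many irreducibles.
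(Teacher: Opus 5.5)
Your argument is correct, and it follows a different route from the paper. The difference matters, because the paper's route has a gap at exactly the step you flag as the main obstacle.

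\textbf{The paper's route.} The paper takes $P=r+mR$ itself and notes $d(r+mR)=1/N(m)>0$ (Lemma \ref{rd}). It then invokes Lemma \ref{app} to get $\sum_{p\in P}N(p)^{-1}=\infty$, and finishes with Lemma \ref{eq} and Theorem \ref{1}.

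\textbf{Why it fails.} This conflates the residue class with the irreducibles in it. On one hand, $r+mR\not\subseteq\bbI_R$, so Theorem \ref{1} does not apply to $r+mR$. On the other hand, $(r+mR)\cap\bbI_R$ has density zero, so Lemma \ref{app} says nothing about it. Divergence of $\sum N(x)^{-1}$ over all of $r+mR$ is trivial and irrelevant.

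\textbf{What your proof supplies.} The real input is what you isolate: divergence of $\sum N(p)^{-1}$ over irreducibles $p\equiv r\pmod m$, which is a Dirichlet-type theorem. You correctly source it from the polynomial Dirichlet theorem for $\bbF_q[t]$. For $\cO_D$ you source it from Hecke's theorem; the relevant set of principal primes is in fact a single ray class modulo $(m)$, nonempty because it contains $(r)$. The cost is reliance on nonvanishing of Hecke $L$-functions rather than the paper's elementary lemmas, but the elementary route does not establish the claim. Your truncation device, $\overline{d}(A)\le d(A_{P_0})$ for finite $P_0$, is also sound. It avoids the infinite-$P$ case of Theorem \ref{1}, whose Tannery-based proof is the shakier part of the paper.

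\textbf{One point to make explicit.} Choose $P_0$ to consist of pairwise non-associate prime elements: monic irreducibles in $\bbF_q[t]$, and one generator of each principal prime ideal in the ray class for $\cO_D$. This matters for two reasons:
\begin{itemize}
\item If $P_0$ contains associates, the product formula of Theorem \ref{1} understates $d(A_{P_0})$. For example, $P_0=\{p,-p\}$ gives the factor squared, so your upper bound would be unjustified.
\item In a non-UFD $\cO_D$, divisibility by a non-prime irreducible does not behave the way the formula assumes.
\end{itemize}
The divergence survives this restriction, since there are finitely many units and Hecke's theorem already produces prime ideals. Likewise, read ``relatively prime'' as $(r)+(m)=R$, which is what makes the class invertible modulo $m$ in the non-UFD case.
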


\section{Proof of Theorem \ref{1}}
\subsection{Auxiliary Lemmas for Theorem \ref{1}}
Let $T,P\subseteq R$ and $n\in\bbN$. Denote
\begin{align*}
    \bbS_R(T,P)&=\{q\in \bbS_R: t|q\ \forall t\in T\text{ but }p\nmid q\ \forall p\in P\},\\
    \bbS_R(T,P)[n]&=\bbS_R(T,P)\cap F_n.
\end{align*}
\begin{lemma}\label{rf}
    Let $F_n\subseteq\bbF_q[t]$ be as in \ref{f}. For any $k\in\bbZ$ and $n\in\bbN$, one has 
    \begin{align*}
        \frac{|F_{q^kn}|}{|F_n|}=q^k.
    \end{align*}
\end{lemma}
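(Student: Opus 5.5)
The plan is to compute $|F_n|$ exactly in closed form and then compare the two counts directly. Since $N(x)=q^{\deg(x)}$ for $x\neq 0$ and $N(0)=0$, membership in $F_n$ is a condition on the degree only. Therefore $|F_n|$ should be a power of $q$ whose exponent is an explicit function of $n$. Throughout, for negative $k$ I read the statement as requiring $q^kn\in\bbN$, so that $F_{q^kn}$ is defined.

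\emph{Step 1.} Let $m(n)$ denote the least integer $m\geq 0$ with $q^m\geq n$, i.e.\ $m(n)=\lceil \log_q n\rceil$. For a nonzero $x$, the condition $N(x)<n$ means $q^{\deg(x)}<n$, which is equivalent to $\deg(x)<m(n)$. The zero polynomial also lies in $F_n$, since $N(0)=0<n$. Hence
\begin{align*}
F_n=\{x\in\bbF_q[t]: x=0 \text{ or } \deg(x)\leq m(n)-1\}.
\end{align*}
This is exactly the $\bbF_q$-vector space of polynomials of degree less than $m(n)$, so $|F_n|=q^{m(n)}$. The edge case $n=1$ is consistent: $F_1=\{0\}$ and $m(1)=0$.

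\emph{Step 2.} I will check that $m(q^kn)=m(n)+k$. This holds because, for any integer $m$, the inequality $q^m\geq q^kn$ is equivalent to $q^{m-k}\geq n$. Hence the least admissible $m$ for $q^kn$ is the least admissible exponent for $n$, shifted by $k$. For $k<0$ this also uses $q^kn\geq 1$, which ensures $m(q^kn)\geq 0$, so the set of admissible $m$ still starts at a nonnegative integer.

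\emph{Step 3.} Dividing gives $|F_{q^kn}|/|F_n|=q^{m(n)+k}/q^{m(n)}=q^k$.

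The only real subtlety is Step 1: one must get the strict inequality and the ceiling right, and remember to include $0$ in $F_n$. With that bookkeeping handled, the rest is immediate.
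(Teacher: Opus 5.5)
Your proposal is correct and follows essentially the same route as the paper, which likewise computes $|F_n|=q^{\lceil\log_q n\rceil}$ and notes that replacing $n$ by $q^kn$ shifts the exponent by $k$. Your explicit bookkeeping makes precise what the paper's one-line computation leaves implicit: including $0\in F_n$, and requiring $q^kn\geq 1$ when $k<0$ (without it the identity fails, e.g.\ for $n=1$, $k=-1$, where both sets are $\{0\}$).
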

\begin{proof}
    Let $k\in\bbZ$ and $n\in\bbN$.
    \begin{align*}
        \frac{|F_{q^kn}|}{|F_n|}&=\frac{|\{x\in\bbF_q[t]:q^{\deg(x)}<q^kn\}|}{|\{x\in\bbF_q[t]:q^{\deg(x)}<n\}|}=\frac{|\{x\in\bbF_q[t]:\deg(x)<k+\log_qn\}|}{|\{x\in\bbF_q[t]:\deg(x)<\log_qn\}|}=\frac{q^{(k+\lceil\log_qn\rceil)}}{q^{\lceil\log_qn\rceil)}}=q^k.
    \end{align*}
\end{proof}

For any $r\in\bbR_{>0}$, one has (see \cite{c})
\begin{align}\label{gb}
    \pi(r-\sqrt{2})^2\leq|\{(x,y)\in\bbZ^2:x^2+y^2\leq r^2\}|\leq\pi(r+\sqrt{2})^2.
\end{align}
\begin{proposition}\label{la}
    For any $r\in\bbR_{>0}$, one has
    \begin{align}
        \frac{\pi(r-\sqrt{1+D})^2}{\sqrt{D}}\leq |\{x\in\bbZ[\sqrt{-D}]:N(x)\leq r^2\}|\leq \frac{\pi(r+\sqrt{1+D})^2}{\sqrt{D}};\label{n3}\\
        \frac{4\pi(r-\sqrt{1+D})^2}{\sqrt{D}}\leq \left|\left\{x\in\bbZ\left[\frac{1+\sqrt{-D}}{2}\right]:N(x)\leq r^2\right\}\right|\leq \frac{4\pi(r+\sqrt{1+D})^2}{\sqrt{D}}\label{3}.
    \end{align}
\end{proposition}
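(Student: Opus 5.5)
The plan is to reduce both estimates to a Gauss-circle type count for a rectangular lattice, generalizing \eqref{gb}. For \eqref{n3}, write $x=a+b\sqrt{-D}$ with $(a,b)\in\bbZ^2$, so $N(x)=a^2+Db^2$. Under the map $(a,b)\mapsto(a,\sqrt{D}\,b)$, the condition $N(x)\le r^2$ becomes the condition that the point lies in the closed Euclidean disk $B_r$ of radius $r$ centered at $0$. Hence
\begin{align*}
|\{x\in\bbZ[\sqrt{-D}]:N(x)\le r^2\}|=|\Lambda\cap B_r|,\qquad \Lambda=\bbZ\times\sqrt{D}\,\bbZ .
\end{align*}

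To each $\lambda=(a,\sqrt D b)\in\Lambda$ I attach the cell $C_\lambda=\lambda+[0,1)\times[0,\sqrt D)$. These cells tile the plane, each has area $\sqrt D$, and each has diameter $\sqrt{1+D}$. There are two containments to check.
\begin{itemize}
\item[(i)] If $\lambda\in B_r$, then $C_\lambda\subseteq B_{r+\sqrt{1+D}}$. This gives $\sqrt D\,|\Lambda\cap B_r|\le\pi(r+\sqrt{1+D})^2$.
\item[(ii)] Every point $z\in B_{r-\sqrt{1+D}}$ lies in a unique cell $C_\lambda$, and $|\lambda-z|\le\sqrt{1+D}$, so $\lambda\in B_r$. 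Thus $B_{r-\sqrt{1+D}}$ is covered by the cells of lattice points in $B_r$, which gives $\pi(r-\sqrt{1+D})^2\le\sqrt D\,|\Lambda\cap B_r|$.
\end{itemize}
If $r<\sqrt{1+D}$, the lower bound is read as the trivial one. Dividing by $\sqrt D$ yields \eqref{n3}. For $D=1$ this is exactly the argument behind \eqref{gb}.

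For \eqref{3}, write $x=\frac{a+b\sqrt{-D}}{2}$ with $a,b\in\bbZ$ and $a\equiv b\pmod 2$. Then $N(x)=\frac{a^2+Db^2}{4}$, and $N(x)\le r^2$ becomes $a^2+Db^2\le(2r)^2$. The map $x\mapsto(a,b)$ is injective. Applying the count above to the full lattice $\Lambda$ in the disk of radius $2r$ gives at most $\pi(2r+\sqrt{1+D})^2/\sqrt D\le 4\pi(r+\sqrt{1+D})^2/\sqrt D$ points. This proves the upper bound in \eqref{3}, since $2r+\sqrt{1+D}\le 2(r+\sqrt{1+D})$.

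The lower bound in \eqref{3} is the step I expect to be the main obstacle, and I am not convinced it holds as stated. The same cell argument at radius $2r$ gives $\pi(2r-\sqrt{1+D})^2/\sqrt D\ge 4\pi(r-\sqrt{1+D})^2/\sqrt D$, but only for the count of all pairs $(a,b)$ with $a^2+Db^2\le 4r^2$. The elements of the ring correspond only to the pairs with $a\equiv b\pmod 2$, which form an index-$2$ sublattice. The corresponding rescaled lattice has covolume $2\sqrt D$, so the cell argument gives roughly $2\pi r^2/\sqrt D$ elements, not $4\pi r^2/\sqrt D$. For large $r$ this appears to contradict the stated lower bound.

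I would therefore first check directly whether the lower bound with constant $4\pi$ holds for the parity-restricted count. If it does not, I would state and prove the sharp version with constant $2\pi$, obtained by running the cell argument on the sublattice $\{(a,\sqrt D b): a\equiv b \ (\operatorname{mod} 2)\}$. Only the order of magnitude $r^2$, i.e.\ $|F_n|\asymp n$ with explicit error $O(\sqrt n)$, should be needed for the later density arguments, and that is unaffected by the constant.
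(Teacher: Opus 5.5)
\textbf{Your doubt is justified: the lower bound in the second display, for $\bbZ[\frac{1+\sqrt{-D}}{2}]$, is false, and the paper's proof breaks exactly where you suspected.} For $D\equiv 3\pmod 4$ the paper reduces to \eqref{n3} via the claimed equality
\[
|\{x\in\cO_D:N(x)\le r^2\}|=|\{a+b\sqrt{-D}\in\bbZ[\sqrt{-D}]:a^2+Db^2\le(2r)^2\}|.
\]
But $\frac{a+b\sqrt{-D}}{2}\in\cO_D$ forces $a\equiv b\pmod 2$, so $x\mapsto 2x$ lands only in an index-$2$ sublattice.

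You do not need to ``check directly''; your own step (i) already gives a disproof. Let $\Lambda'=\{(a,\sqrt D\,b):a,b\in\bbZ,\ a\equiv b\ (\operatorname{mod}2)\}$. Its translates of $[0,2)\times[0,\sqrt D)$ tile the plane, and each cell has area $2\sqrt D$ and diameter $\sqrt{4+D}$. Step (i) then gives
\[
|\{x\in\cO_D:N(x)\le r^2\}|=|\Lambda'\cap B_{2r}|\le\frac{\pi(2r+\sqrt{4+D})^2}{2\sqrt D}=\frac{2\pi}{\sqrt D}\Bigl(r+\frac{\sqrt{4+D}}{2}\Bigr)^2 .
\]
This is smaller than $4\pi(r-\sqrt{1+D})^2/\sqrt D$ for all large $r$. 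For example, with $D=3$ and every $r\ge 11$ one has $(r+\frac{\sqrt7}{2})^2<2(r-2)^2$.

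Step (ii) on $\Lambda'$ gives the matching lower bound $\frac{2\pi}{\sqrt D}(r-\frac{\sqrt{4+D}}{2})^2$ for $2r\ge\sqrt{4+D}$. So $2\pi$ is the correct constant, as it must be, since $\cO_D$ has covolume $\sqrt D/2$ in $\mathbb{C}$. Your upper bound via the injection into the full lattice at radius $2r$ is correct. You are also right that Lemma \ref{ro} survives, since it only uses $|F_n|=cn+O(\sqrt n)$ with the same $c$ in both bounds.

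One correction to your treatment of \eqref{n3}: for $r<\sqrt{1+D}$ the lower bound is not ``trivial'', because $(r-\sqrt{1+D})^2>0$. It is in fact false as written. For $r<1$ the count in \eqref{n3} is $1$, while $\pi(r-\sqrt{1+D})^2/\sqrt D\to\pi(1+D)/\sqrt D\ge 2\pi$ as $r\to0^+$. So you should add the hypothesis $r\ge\sqrt{1+D}$ rather than reinterpret the bound. The paper's argument tacitly needs this hypothesis too, since its Cauchy--Schwarz step uses $\sqrt{m^2+Dn^2}\le r-\sqrt{1+D}$.

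With that hypothesis, your proof of \eqref{n3} is the paper's argument after the change of variables $(a,b)\mapsto(a,\sqrt D\,b)$. The paper compares unit squares with the ellipse $a^2+Db^2\le r^2$ of area $\pi r^2/\sqrt D$, using $|m|+D|n|\le\sqrt{(m^2+Dn^2)(1+D)}$; you use the diameter $\sqrt{1+D}$ of a cell instead. Your covering step (ii) is actually the correct form of the paper's lower-bound step. As written, the paper only shows that squares at lattice points of $E_{r-\sqrt{1+D}}$ lie inside $E_r$. What is needed is that every point of $E_{r-\sqrt{1+D}}$ lies in a square whose corner is in $E_r$, which is what your (ii) states.
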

\begin{proof}
Inequalities \ref{n3} imply \ref{3}, since 
\begin{align*}
\left|\left\{x\in\bbZ\left[\frac{1+\sqrt{-D}}{2}\right]:N(x)\leq r^2\right\}\right|&=\left|\left\{\frac{a}{2}+\frac{\sqrt{-D}b}{2}\in \bbZ\left[\frac{1+\sqrt{-D}}{2}\right]:\frac{a^2+Db^2}{4}\leq r^2\right\}\right|\\
    &=|\{a+\sqrt{-D}b\in\bbZ[\sqrt{-D}]:a^2+Db^2\leq(2r)^2\}|\\
    &=|\{x\in\bbZ[\sqrt{-D}]:N(x)\leq (2r)^2\}|,
\end{align*}
and
\begin{align*}
    (2r+\sqrt{1+D})^2&=4\left(r+\frac{\sqrt{1+D}}{2}\right)^2\leq 4(r+\sqrt{1+D})^2;\\ 
    (2r-\sqrt{1+D})^2&=4\left(r-\frac{\sqrt{1+D}}{2}\right)^2\geq 4(r-\sqrt{1+D})^2.
\end{align*}
Now, consider the ellipse 
\begin{align*}
    E_r=\{(a,b)\in\bbR^2:a^2+Db^2\leq r^2\}.
\end{align*}
For each $m,n\in\bbZ$, let $R_{m,n}=[m,m+1]\times[n,n+1]$. Then $|\{x\in\bbZ[\sqrt{-D}]:N(x)\leq r^2\}|$ equals the number of $R_{m,n}$ whose lower-left corner $(m,n)$ lies inside the ellipse $E_r$.

\noindent If $(m,n)$ lies in the ellipse $E_r$, using the Cauchy-Schwarz,
\begin{align*}
    (|m|+1)^2+D(|n|+1)^2&=m^2+Dn^2+2(|m|+D|n|)+1+D\\
    &\leq r^2+2\sqrt{(m^2+Dn^2)(1+D)}+1+D\\
    &=r^2+2\sqrt{1+D}r+1+D=(r+\sqrt{1+D})^2.
\end{align*}
That is, every vertex of $R_{m,n}$ lies in $E_{r+\sqrt{1+D}}$. 

\noindent Similarly, if $(m,n)$ satisfies $m^2+Dn^2\leq (r-\sqrt{1+D})^2$, then 
\begin{align*}
    (|m|+1)^2+D(|n|+1)^2\leq ((r-\sqrt{1+D})+\sqrt{1+D})^2=r^2,
\end{align*}
by replacing $r$ by $r-\sqrt{1+D}$ in the previous case. That is, every vertex of $R_{m,n}$ lies in $E_r$. Consequently, we obtain
\begin{align*}
    \frac{\pi(r-\sqrt{1+D})^2}{\sqrt{D}}=\operatorname{vol}(E_{r-\sqrt{1+D}})\leq |E_r\cap \bbZ^2|\leq \operatorname{vol}(E_{r+\sqrt{1+D}})=\frac{\pi(r+\sqrt{1+D})^2}{\sqrt{D}},
\end{align*}
yielding the inequalities \ref{n3}.
\end{proof}
\begin{lemma}\label{ro}
    Let $F_n\subseteq\cO_D$ be as in \ref{f}. For any $k\in\bbN$, one has 
    \begin{align*}
        \lim_{n\to\infty}\frac{|F_{kn}|}{|F_n|}=k.
    \end{align*}
\end{lemma}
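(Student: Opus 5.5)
We will use Proposition \ref{la} to squeeze $|F_n|$ between two quantities, both asymptotic to a constant multiple of $n$. Write $F_n=\{x\in\cO_D:N(x)<n\}$. Because $N$ takes integer values, $N(x)<n$ is the same as $N(x)\le n-1$. Setting $r=\sqrt{n-1}$ in Proposition \ref{la}, we get
\begin{align*}
\frac{c\pi(\sqrt{n-1}-\sqrt{1+D})^2}{\sqrt{D}}\le|F_n|\le\frac{c\pi(\sqrt{n-1}+\sqrt{1+D})^2}{\sqrt{D}},
\end{align*}
with $c=1$ when $D\not\equiv3\pmod 4$ and $c=4$ when $D\equiv3\pmod 4$. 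The lower bound only has content once $\sqrt{n-1}>\sqrt{1+D}$, which is harmless because we take $n\to\infty$. The constant $c\pi/\sqrt D$ depends only on $D$, and it will cancel in the ratio, so whether it is exactly the right constant does not matter here.

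Next we bound the ratio. Apply the same bounds with $kn$ in place of $n$, then divide the upper bound for $|F_{kn}|$ by the lower bound for $|F_n|$, and divide the lower bound for $|F_{kn}|$ by the upper bound for $|F_n|$. This gives
\begin{align*}
\frac{(\sqrt{kn-1}-\sqrt{1+D})^2}{(\sqrt{n-1}+\sqrt{1+D})^2}\le\frac{|F_{kn}|}{|F_n|}\le\frac{(\sqrt{kn-1}+\sqrt{1+D})^2}{(\sqrt{n-1}-\sqrt{1+D})^2}.
\end{align*}

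To take the limit, divide the numerator and denominator of each bound by $n$. The numerators become $\bigl(\sqrt{k-1/n}\pm\sqrt{(1+D)/n}\bigr)^2$, which tend to $k$. The denominators become $\bigl(\sqrt{1-1/n}\pm\sqrt{(1+D)/n}\bigr)^2$, which tend to $1$. Both bounds therefore tend to $k$, and the squeeze theorem gives $\lim_{n\to\infty}|F_{kn}|/|F_n|=k$.

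There is no real obstacle. The only points needing care are the strict versus non-strict inequality, handled by using $r^2=n-1$, and the requirement that $n$ be large enough for the lower bound to be positive. Both are routine.
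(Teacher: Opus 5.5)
You use the same squeeze as the paper: bound $|F_{kn}|$ and $|F_n|$ above and below via Proposition~\ref{la}, then let $n\to\infty$. Your choice $r^2=n-1$ is more careful than the paper's. The paper inserts $r^2=kn$ and $r^2=n$ into the lower bounds, even though $F_n$ is defined by the strict inequality $N(x)<n$.

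There is one genuine gap, in the case $D\equiv 3\pmod 4$, and the paper's proof has the same gap. The lower bound you quote there, with $c=4$, is false. The elements of $\cO_D$ are the $\frac{a+b\sqrt{-D}}{2}$ with $a\equiv b\pmod 2$. Doubling therefore identifies $\cO_D$ with an index-$2$ sublattice of $\bbZ[\sqrt{-D}]$, not with all of it; the reduction in the proof of Proposition~\ref{la} drops this parity condition. Consequently $|\{x\in\cO_D:N(x)\le r^2\}|\sim 2\pi r^2/\sqrt D$, which is eventually smaller than $4\pi(r-\sqrt{1+D})^2/\sqrt D$. For instance, with $D=3$ and $\omega=\frac{1+\sqrt{-3}}{2}$, one has $N(a+b\omega)=a^2+ab+b^2$, and this ellipse has area $2\pi r^2/\sqrt3$.

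So your remark that the constant ``will cancel in the ratio, so whether it is exactly the right constant does not matter'' does not justify the step. The value of the constant is indeed irrelevant, but you need a \emph{true} two-sided bound with the \emph{same} constant on both sides. With $c=4$ the lower half fails, so the squeeze is unjustified as written.

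The repair is short. Write $\cO_D=\bbZ[\sqrt{-D}]\sqcup\bigl(\tfrac{1+\sqrt{-D}}{2}+\bbZ[\sqrt{-D}]\bigr)$. The second coset consists of the $a+b\sqrt{-D}$ with $a,b\in\tfrac12+\bbZ$. The unit-square and Cauchy--Schwarz argument from Proposition~\ref{la} works verbatim for these half-integer points. Adding the two cosets gives $\frac{2\pi(r-\sqrt{1+D})^2}{\sqrt D}\le|\{x\in\cO_D:N(x)\le r^2\}|\le\frac{2\pi(r+\sqrt{1+D})^2}{\sqrt D}$ for $r\ge\sqrt{1+D}$. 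With $c=2$ your squeeze then goes through unchanged.
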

\begin{proof}
    Let $k\in\bbN$. For $D\not\equiv3\ (\operatorname{mod}4)$, using inequalities \ref{n3} in Proposition \ref{la},
    \begin{align*}
        \frac{\left(\sqrt{k}-\frac{\sqrt{1+D}}{\sqrt{n}}\right)^2}{\left(\sqrt{1+\frac{1}{n}}+\frac{\sqrt{1+D}}{\sqrt{n}}\right)^2}=\frac{\frac{\pi(\sqrt{kn}-\sqrt{1+D})^2}{D}}{\frac{\pi(\sqrt{n+1}+\sqrt{1+D})^2}{D}}\leq \frac{|F_{kn}|}{|F_n|}\leq \frac{\frac{\pi(\sqrt{kn+1}+\sqrt{1+D})^2}{D}}{\frac{\pi(\sqrt{n}-\sqrt{1+D})^2}{D}}=\frac{\left(\sqrt{k+\frac{1}{n}}+\frac{\sqrt{1+D}}{\sqrt{n}}\right)^2}{\left(1-\frac{\sqrt{1+D}}{\sqrt{n}}\right)^2}.
    \end{align*}
    Similarly, for $D\equiv3\ (\operatorname{mod}4)$, using inequalities \ref{3} in Proposition \ref{la},
    \begin{align*}
        \frac{\left(\sqrt{k}-\frac{\sqrt{1+D}}{\sqrt{n}}\right)^2}{\left(\sqrt{1+\frac{1}{n}}+\frac{\sqrt{1+D}}{\sqrt{n}}\right)^2}=\frac{\frac{4\pi(\sqrt{kn}-\sqrt{1+D})^2}{D}}{\frac{4\pi(\sqrt{n+1}+\sqrt{1+D})^2}{D}}\leq \frac{|F_{kn}|}{|F_n|}\leq \frac{\frac{4\pi(\sqrt{kn+1}+\sqrt{1+D})^2}{D}}{\frac{4\pi(\sqrt{n}-\sqrt{1+D})^2}{D}}=\frac{\left(\sqrt{k+\frac{1}{n}}+\frac{\sqrt{1+D}}{\sqrt{n}}\right)^2}{\left(1-\frac{\sqrt{1+D}}{\sqrt{n}}\right)^2}.
    \end{align*}
    Since $\lim_{n\to\infty}\frac{\left(\sqrt{k}-\frac{\sqrt{1+D}}{\sqrt{n}}\right)^2}{\left(\sqrt{1+\frac{1}{n}}+\frac{\sqrt{1+D}}{\sqrt{n}}\right)^2}=\lim_{n\to\infty}\frac{\left(\sqrt{k+\frac{1}{n}}+\frac{\sqrt{1+D}}{\sqrt{n}}\right)^2}{\left(1-\frac{\sqrt{1+D}}{\sqrt{n}}\right)^2}=k$, it follows that  $\lim_{n\to\infty}\frac{|F_{kn}|}{|F_n|}=k$ in either case.
\end{proof}
\begin{lemma}\label{u}
    For any $A\in\cP_f(\bbI_R)$, disjoint from both $T$ and from $P$, and for any $n\in\bbN$,
\begin{align*}
    \bbS_R(T,A\cup P)[n]=\bbS_R(T\cup A,P)[nN(m_A)],
\end{align*}
where $m_A=\prod_{p\in A}p$. Moreover, the set $\bbS_R(T\cup A,P)$ has density if and only if $\bbS_R(T,A\cup P)$ has density, and if these densities exist, then
\begin{align*}
    d(\bbS_R(T,A\cup P))=N(m_A) d(\bbS_R(T\cup A,P)).
\end{align*}
\end{lemma}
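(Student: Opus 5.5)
The identity $\bbS_R(T,A\cup P)[n]=\bbS_R(T\cup A,P)[nN(m_A)]$ cannot hold literally, since the two sides are different subsets of $R$. I will read it as an equality of cardinalities, realized by the bijection $\phi(x)=x\,m_A$. The plan is to check that $\phi$ restricts to a bijection
\[
\bbS_R(T,A\cup P)[n]\longrightarrow \bbS_R(T\cup A,P)[nN(m_A)].
\]
I will then pass to densities using the growth of $|F_n|$ from Lemma \ref{rf} and Lemma \ref{ro}.

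\textbf{Step 1 (bijection).} Take $x\in\bbS_R(T,A\cup P)$ with $N(x)<n$. Since no $p\in A$ divides $x$ and $x$ is squarefree, $x m_A$ is squarefree. It is divisible by every element of $T\cup A$ and by no element of $P$, because $A\cap P=\emptyset$ and each $p\in P$ divides neither $x$ nor $m_A$. By multiplicativity of $N$, $N(xm_A)=N(x)N(m_A)<nN(m_A)$.

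Conversely, take $y\in\bbS_R(T\cup A,P)$ with $N(y)<nN(m_A)$. Then every $p\in A$ divides $y$, and the elements of $A$ are pairwise distinct irreducibles, so $m_A\mid y$. Write $y=xm_A$. Squarefreeness of $y$ forces $p\nmid x$ for $p\in A$. Each $t\in T$ is different from the elements of $A$, so $t\mid y$ forces $t\mid x$. Also $N(x)=N(y)/N(m_A)<n$. Hence $\phi$ is a bijection.

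\textbf{Main obstacle.} Step 1 uses that distinct irreducibles behave like primes: if $p\mid y$ for pairwise non-associate irreducibles $p$, then their product divides $y$, and cancellation against $m_A$ works. This is automatic in $\bbF_q[t]$. In $\cO_D$ without unique factorization it needs the elements of $T\cup A\cup P$ to generate distinct prime ideals, so I will work at the level of ideals, where $R$ is Dedekind and the argument goes through verbatim.

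\textbf{Step 2 (densities).} Set $M=N(m_A)$. By Step 1,
\[
\frac{|\bbS_R(T,A\cup P)\cap F_n|}{|F_n|}=\frac{|\bbS_R(T\cup A,P)\cap F_{nM}|}{|F_{nM}|}\cdot\frac{|F_{nM}|}{|F_n|}.
\]
The last factor is exactly $M$ for $\bbF_q[t]$ by Lemma \ref{rf}, since $M$ is a power of $q$. For $\cO_D$ it tends to $M$ by Lemma \ref{ro}.

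So if $\bbS_R(T\cup A,P)$ has density $\delta$, then $\bbS_R(T,A\cup P)$ has density $M\delta$, because the middle factor is a subsequence of a convergent sequence.

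For the converse, the density of $\bbS_R(T,A\cup P)$ gives convergence of the ratio for $\bbS_R(T\cup A,P)$ only along the indices $m=nM$. For general $m$, choose $n$ with $nM\le m<(n+1)M$. Monotonicity of both $|S\cap F_m|$ and $|F_m|$ in $m$ sandwiches the ratio at $m$ between
\[
\frac{|S\cap F_{nM}|}{|F_{(n+1)M}|}\quad\text{and}\quad \frac{|S\cap F_{(n+1)M}|}{|F_{nM}|}.
\]
Lemma \ref{ro}, or Lemma \ref{rf}, gives $|F_{(n+1)M}|/|F_{nM}|\to1$, so both bounds share the same limit. This yields the density together with the formula $d(\bbS_R(T,A\cup P))=N(m_A)\,d(\bbS_R(T\cup A,P))$.
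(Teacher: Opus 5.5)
Your Step 1 is the paper's map $\sigma\colon b\mapsto bm_A$. Your reading of the displayed identity as an equality of cardinalities is what the paper's proof actually establishes, and your ratio identity is the one the paper writes down, so the route is the same. You are more careful than the paper at the bijection: it never checks that $bm_A$ is squarefree, and it uses ``$a\mid c$ for all $a\in A$ $\Leftrightarrow$ $m_A\mid c$'' without comment. Your caveat there is right, but it restricts the statement rather than going through ``verbatim at the level of ideals''. You need the elements of $T\cup A\cup P$ to be pairwise non-associate prime elements, and without this the lemma fails. In $\mathbb{Z}[\sqrt{-5}]$ with $A=\{2,1+\sqrt{-5}\}$, the element $6$ is squarefree in the paper's sense and divisible by both, yet $N(m_A)=24\nmid 36=N(6)$. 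Already in $\mathbb{F}_3[t]$, ``pairwise distinct'' is not enough: for $A=\{f,2f\}$ the product $m_A=2f^2$ divides no squarefree element. State that hypothesis explicitly.

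The genuine gap is in your converse. The paper stops at the ratio identity, which only shows that a density for $\mathbb{S}_R(T\cup A,P)$ yields one for $\mathbb{S}_R(T,A\cup P)$. Your interpolation is needed for the ``if and only if'', and part (ii) of the proof of Theorem~\ref{1} uses exactly this direction. But for $\mathbb{F}_q[t]$ the claim $|F_{(n+1)M}|/|F_{nM}|\to 1$ is false, and Lemma~\ref{rf} says nothing of the kind. Since $|F_m|=q^{\lceil \log_q m\rceil}$, for $M=q^d$ and $n=q^j$ one has $|F_{nM}|=q^{j+d}$ but $|F_{(n+1)M}|=q^{j+d+1}$. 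Your sandwich then only gives $\delta/(qM)\le\liminf\le\limsup\le q\delta/M$, where $\delta=d(\mathbb{S}_R(T,A\cup P))$. Use the step structure instead. $F_m$ changes only when $m$ passes a power of $q$, and because $M$ is itself a power of $q$, no power of $q$ lies strictly between $nM$ and $(n+1)M$. Hence $F_m=F_{(n+1)M}$ for every $m\in(nM,(n+1)M]$, so the ratio at $m$ equals the ratio at $(n+1)M$, and convergence along multiples of $M$ is convergence of the whole sequence. For $\mathcal{O}_D$ your sandwich works. There the input $|F_{(n+1)M}|/|F_{nM}|\to 1$ comes from Proposition~\ref{la}, which gives $|F_x|=cx+O(\sqrt{x})$, or from Lemma~\ref{ro} plus monotonicity of $|F_m|$, not from the literal statement of Lemma~\ref{ro}.
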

\begin{proof}
    Consider a map 
    \begin{align*}
        \sigma:\bbS_R(T,A\cup P)[n]&\to\bbS_R(T\cup A,P)[nN(m_A)],\\
        b&\mapsto bm_A.
    \end{align*}
    Let $b\in \bbS_R(T,A\cup P)[n]$. Then $N(b)<n$, so $N(bm_A)<nN(m_A)$, since $N$ is multiplicative. Thus, $\sigma$ is well-defined and, moreover is injective. Let $c\in \bbS_R(T\cup A,P)[nN(m_A)]$. Then 
    \begin{align*}
        &\text{$q|c$ $\forall q\in T\cup A$ and $p\nmid c$ $\forall p\in P$}\\
        &\Leftrightarrow\text{$t|c$ $\forall t\in T$, $a|c$ $\forall a\in A$, and $p\nmid c$ $\forall p\in P$}\\
        &\Leftrightarrow\text{$t|c$ $\forall t\in T$, $m_A|c$, and $p\nmid c$ $\forall p\in P$}\\
        &\underset{T\cap A=\emptyset}{\Leftrightarrow}\text{$t|(c/m_A)$ $\forall t\in T$ and $p\nmid c$ $\forall p\in P$}.
    \end{align*}
   By multiplicativity, $nN(m_A)>N(c)=N((c/m_A)m_A)=N(c/m_A)N(m_A)$, so $N(c/m_A)<n$. Thus, $c/m_A\in \bbS_R(T,A\cup P)[n]$ and $\sigma(c/m_A)=(c/m_A)m_A=c$. Hence, $\sigma$ is bijective, so that $|\bbS_R(T,A\cup P)[n]|=|\bbS_R(T\cup A,P)[nN(m_A)]|$. This implies 
\begin{align*}
    \frac{|\bbS_R(T,A\cup P)[n]|}{|F_n|}=\frac{|F_{nN(m_A)}|}{|F_n|}\frac{|\bbS_R(T\cup A,P)[nN(m_A)]|}{|F_{nN(m_A)}|},
\end{align*}
and hence $d(\bbS_R(T,A\cup P))=N(m_A)d(\bbS_R(T\cup A,P))$ by Lemmas \ref{rf} and \ref{ro}.
\end{proof}
\begin{lemma}\label{s}
    Let $p\in\bbI_R$ not in $T$. If the set $\bbS_R(T,\emptyset)$ has density $\delta$, then 
    \begin{align*}
        d(\bbS_R(T\cup\{p\},\emptyset))=\frac{1}{N(p)+1}\delta.
    \end{align*}
\end{lemma}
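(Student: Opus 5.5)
The plan is to split $\bbS_R(T,\emptyset)$ according to divisibility by $p$ and then use Lemma \ref{u} to tie the two pieces together. Every element of $\bbS_R(T,\emptyset)$ is either divisible by $p$ or not, so we have a disjoint decomposition
\begin{align*}
    \bbS_R(T,\emptyset)=\bbS_R(T\cup\{p\},\emptyset)\ \sqcup\ \bbS_R(T,\{p\}).
\end{align*}
Intersecting with $F_n$ and counting gives
\begin{align*}
    |\bbS_R(T,\emptyset)[n]|=|\bbS_R(T\cup\{p\},\emptyset)[n]|+|\bbS_R(T,\{p\})[n]|
\end{align*}
for every $n$.

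Next, apply Lemma \ref{u} with $A=\{p\}$ and $P=\emptyset$; this is legitimate because $p\notin T$. It gives $m_A=p$ and
\begin{align*}
    |\bbS_R(T,\{p\})[n]|=|\bbS_R(T\cup\{p\},\emptyset)[nN(p)]|.
\end{align*}
Write $a_n=|\bbS_R(T\cup\{p\},\emptyset)[n]|/|F_n|$. Dividing the count identity by $|F_n|$ yields
\begin{align*}
    \frac{|\bbS_R(T,\emptyset)[n]|}{|F_n|}=a_n+\frac{|F_{nN(p)}|}{|F_n|}\,a_{nN(p)}.
\end{align*}
The left side tends to $\delta$. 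By Lemma \ref{rf} (where $N(p)=q^{\deg p}$) or Lemma \ref{ro}, the ratio $|F_{nN(p)}|/|F_n|$ tends to $N(p)$.

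The main obstacle is that we do not yet know that $a_n$ converges, since the existence of $d(\bbS_R(T\cup\{p\},\emptyset))$ is part of what must be proved. Lemma \ref{u} only gives the equivalence of existence between the two pieces, not existence itself. I would resolve this with a liminf/limsup squeeze, using the containment between consecutive scales. Since $F_n\subseteq F_{nN(p)}$, we have $a_{nN(p)}|F_{nN(p)}|\ge a_n|F_n|$, i.e.\ $c_n\ge a_n$ where $c_n:=\frac{|F_{nN(p)}|}{|F_n|}a_{nN(p)}$. Hence $\delta+o(1)=a_n+c_n\ge 2a_n$, so $\overline{a}:=\limsup a_n\le\delta/2$ is finite.

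To pin down the limit exactly, iterate the identity along the geometric sequence $n, nN(p), nN(p)^2,\dots$. Here $x_n:=|\bbS_R(T\cup\{p\},\emptyset)[n]|$ and $y_n:=|\bbS_R(T,\emptyset)[n]|$, and the identity reads $y_n=x_n+x_{nN(p)}$. Solving the alternating recursion gives
\begin{align*}
    x_{n}=\sum_{j\ge0}(-1)^j\,y_{n/N(p)^{j+1}},
\end{align*}
which is a finite sum because $x_m=0$ once $m\le 1$. Dividing by $|F_n|$ and using $y_m\approx\delta|F_m|\approx\delta N(p)^{-j-1}|F_n|$ for $m=n/N(p)^{j+1}$ gives
\begin{align*}
    \frac{x_n}{|F_n|}\to\delta\sum_{j\ge0}(-1)^jN(p)^{-j-1}=\frac{\delta}{N(p)+1}.
\end{align*}
The tail is controlled uniformly because the terms decay geometrically; the small-$m$ terms are bounded by $|F_m|/|F_n|\lesssim N(p)^{-j}$. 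One technical point is that $n/N(p)^{j}$ need not be an integer. Since $F_x$ makes sense for any real threshold, I would either work with real $n$ throughout or restrict to thresholds in $\bbN$ with floors, using that $|F_{\lfloor x\rfloor}|/|F_x|\to1$ by Proposition \ref{la} (trivially for $\bbF_q[t]$). This yields $d(\bbS_R(T\cup\{p\},\emptyset))=\delta/(N(p)+1)$.
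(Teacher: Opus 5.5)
Your proposal is correct and is essentially the paper's argument. You split $\bbS_R(T,\emptyset)$ by divisibility by $p$, use Lemma \ref{u} with $A=\{p\}$ to get $y_m=x_m+x_{mN(p)}$, unwind this along the scales $n/N(p)^j$, and sum the alternating geometric series to $\delta/(N(p)+1)$. The only difference is bookkeeping: you run the recursion to exhaustion with real thresholds ($x_m=0$ for $m\le 1$) and pass to the limit by dominated convergence, while the paper truncates after $k$ steps with floors and bounds the leftover term by $|F_{\lfloor n/N(p)^k\rfloor}|$. Your version is slightly cleaner, since it avoids the paper's tacit identification of $\lfloor n/N(p)^j\rfloor$ with $N(p)\lfloor n/N(p)^{j+1}\rfloor$, and the liminf/limsup detour is not needed.
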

\begin{proof}
    For each $n,m\in\bbN$,
    \begin{align*}
        |\bbS_R(T,\emptyset)[m]|=|\bbS_R(T\cup\{p\},\emptyset)[m]|+|\bbS_R(T\cup\{p\},\emptyset)[nm]|.
    \end{align*}
    Let $\epsilon>0$. Pick $M_1>0$ such that if $n>M_1$ then
    \begin{align}\label{i1}
        \frac{\epsilon}{4}&>\left|\frac{|\bbS_R(T,\emptyset)[\lfloor n/N(p)\rfloor]|}{\left|F_{\lfloor n/N(p)\rfloor}\right|}-\delta\right|=\left|\frac{|\bbS_R(T\cup\{p\},\emptyset)[\lfloor n/N(p)\rfloor]|}{\left|F_{\lfloor n/N(p)\rfloor}\right|}+\frac{|\bbS_R(T\cup\{p\},\emptyset)[n]|}{\left|F_{\lfloor n/N(p)\rfloor}\right|}-\delta\right|.
    \end{align}
    Now, pick $k\in2\bbN$ such that $\frac{1}{N(p)^k}<\frac{\epsilon}{4}$. By Lemmas \ref{rf} and \ref{ro}, $\forall j\in\{1,\dots,k\}$, $\exists M_2>0$ such that if $n>M_2$ then
    \begin{align}\label{ineq1}
        \left|\frac{\left|F_{\lfloor n/N(p)^j\rfloor}\right|}{\left|F_{N(p)^j\lfloor n/N(p)^k\rfloor}\right|}-\frac{1}{N(p)^j}\right|<\frac{\epsilon}{2^{3+k-j}}.
    \end{align}
    For $n>M:=\max\{M_1,M_2\}$, 
    \begin{align}
    \begin{matrix}
        |\bbS_R(T\cup\{p\},\emptyset)[\lfloor n/N(p)^k\rfloor]|\leq \left|F_{\lfloor n/N(p)^k\rfloor}\right|=\frac{\left|F_{\lfloor n/N(p)^k\rfloor}\right|}{|F_n|}|F_n|\\
        \leq \frac{\left|F_{\lfloor n/N(p)^k\rfloor}\right|}{\left|F_{N(p)^k\lfloor n/N(p)^k\rfloor}\right|}|F_n|<\left(\frac{1}{N(p)}+\frac{\epsilon}{8}\right)|F_n|<\frac{\epsilon}{4}|F_n|.\hspace{45pt}
    \end{matrix}
        \label{ineq2}
    \end{align}
    and we also have
    \begin{align}
    \begin{matrix}
        \left|-\delta|F_n|\sum_{j=1}^k\left(-\frac{1}{N(p)}\right)^j-\delta|F_n|\frac{1}{N(p)+1}\right|=\delta|F_n|\left|\frac{-\frac{1}{N(p)}-\left(-\frac{1}{N(p)}\right)^{k+1}}{1+\frac{1}{N(p)}}+\frac{1}{N(p)+1}\right|\\
        =\delta|F_n|\left|\frac{N(p)^{-k}}{N(p)+1}\right|<\delta|F_n|N(p)^{-k}<\frac{\epsilon}{4}\delta|F_n|\leq\frac{\epsilon}{4}|F_n|.
    \end{matrix}
        \label{ineq3}
    \end{align}
    If $n>N(p)^kM$, then for all $0\leq j\leq k$, we have $n/N(p)^j>M$, and hence using inequality \ref{i1}, 
    \begin{align}
        \footnotesize\begin{matrix}
            \left|(-1)^j|\bbS_R(T\cup\{p\},\emptyset)[\lfloor n/N(p)^j\rfloor]|+(-1)^j|\bbS_R(T\cup\{p\},\emptyset)[\lfloor n/N(p)^{j+1}\rfloor]|-(-1)^j\delta\left|F_{\lfloor n/N(p)^{j+1}\rfloor}\right|\right|\\
        =\left|F_{\lfloor n/N(p)^{j+1}\rfloor}\right|\left|(-1)^j\frac{|\bbS_R(T\cup\{p\},\emptyset)[\lfloor n/N(p)^j\rfloor]|}{\left|F_{\lfloor n/N(p)^{j+1}\rfloor}\right|}+(-1)^j\frac{|\bbS_R(T\cup\{p\},\emptyset)[\lfloor n/N(p)^{j+1}\rfloor]|}{\left|F_{\lfloor n/N(p)^{j+1}\rfloor}\right|}-(-1)^j\delta\right|\hspace{20pt}\\
        \leq\left|F_{\lfloor n/N(p)^{j+1}\rfloor}\right|\left|(-1)^j\frac{|\bbS_R(T\cup\{p\},\emptyset)[\lfloor n/N(p)^{j+1}\rfloor N(p)]|}{\left|F_{\lfloor n/N(p)^{j+1}\rfloor}\right|}+(-1)^j\frac{|\bbS_R(T\cup\{p\},\emptyset)[\lfloor n/N(p)^{j+1}\rfloor]|}{\left|F_{\lfloor n/N(p)^{j+1}\rfloor}\right|}-(-1)^j\delta\right|\\
        <\frac{\epsilon}{4}\left|F_{\lfloor n/N(p)^{j+1}\rfloor}\right|.\hspace{300pt}
        \end{matrix}\normalsize\label{ineq4}
    \end{align}
    Using the triangle inequality and combining the inequalities \ref{ineq1}, \ref{ineq2}, \ref{ineq3}, and \ref{ineq4}, we obtain:
    \begin{align*}
        &\left|\frac{|\bbS_R(T\cup\{p\},\emptyset)[n]|}{|F_n|}-\frac{1}{N(p)+1}\delta\right|<\frac{\epsilon}{4}\left(\sum_{j=1}^k\frac{\left|F_{\lfloor n/N(p)^j\rfloor}\right|}{|F_n|}\right)+\frac{\epsilon}{4}+\frac{\epsilon}{4}\\
        &< \frac{\epsilon}{4}\left(\sum_{j=1}^k\frac{1}{N(p)^j}+\frac{\epsilon}{2^{3+k-j}}\right)+\frac{\epsilon}{2}<\frac{\epsilon}{4}\left(\frac{\frac{1}{N(p)}-\frac{1}{N(p)^{k+1}}}{1-\frac{1}{N(p)}}+3\right)=\frac{\epsilon}{4}\left(\underbrace{\frac{1-\frac{1}{N(p)^k}}{N(p)-1}}_{<1}+3\right)<\epsilon.
    \end{align*}
    Hence, $\bbS_R(T\cup\{p\},\emptyset)$ has density $\frac{1}{N(p)+1}\delta$.
\end{proof}

\subsection{Proof of Theorem \ref{1}}
\begin{itemize}
    \item[(i)]  When $P$ is finite.
    
    \noindent Applying Lemma \ref{u}, with $P=\emptyset$ and $A=P$, and Lemma \ref{s}, we get
    \begin{align*}
        d(\bbS_R(T,P))&\underset{\text{Lemma \ref{u}}}{=}N(m_P)d(\bbS_R(T\cup P,\emptyset))\\
        &\underset{\text{Lemma \ref{s}}}{=}d(\bbS_R)\prod_{p\in P}N(p)\prod_{q\in T\cup P}\frac{1}{N(q)+1}\\
        &=d(\bbS_R)\prod_{t\in T}\frac{1}{N(t)+1}\prod_{p\in P}\frac{N(p)}{N(p)+1}.
    \end{align*}

\item[(ii)] When $P$ is infinite.

\noindent Let $\{p_1,p_2,\dots\}\subseteq P$ with $N(p_1)\leq N(p_2)\leq \cdots$. Since $\frac{N(p_k)}{N(p_k)+1}<1$ for all $k$, it follows that $\left\{\prod_{i=1}^n\frac{N(p_k)}{N(p_k)+1}\right\}_{n\in\bbN}$ form a strictly decreasing sequence bounded below by $0$, and thus $\prod_{k=1}^n\frac{N(p_k)}{N(p_k)+1}$ converges, say $\alpha$. If $\alpha\neq0$, then
    \begin{align*}
        \sum_k\log\left(\frac{N(p_k)}{N(p_k)+1}\right)=\sum_k\underbrace{(\log(N(p_k))-\log(N(p_k)+1))}_{\in\left(\frac{1}{2N(p_k)},\frac{1}{N(p_k)}\right)}\leq\sum_k\frac{1}{N(p_k)}
    \end{align*}
    converges absolutely to $\log(\alpha)$, so that $\prod_k\frac{N(p_k)}{N(p_k)+1}$ is unchanged under arrangements.  The infinite product $\prod_k\frac{N(p_k)}{N(p_k)+1}$ is also unchanged under arrangements when $\alpha=0$. Therefore, $\prod_k\frac{N(p_k)}{N(p_k)+1}$ is well-defined.

    \noindent When $T=\emptyset$, by applying Tannery's Theorem\cite{h} and Lemma \ref{s}, we get
    \begin{align*}
        &d(\bbS_R\setminus \bbS_R(\emptyset,P))=\lim_{n\to\infty}\frac{(\bbS_R\setminus \bbS_R(\emptyset,P))[n]}{n}=\lim_{n\to\infty}\sum_{k=1}^\infty\frac{\bbS_R(\{p_k\},\{p_1,\dots,p_{k-1}\})[n]}{n}\\
        &\underset{\text{Tannery}}{=}\sum_{k=1}^\infty\lim_{n\to\infty}\frac{\bbS_R(\{p_k\},\{p_1,\dots,p_{k-1}\})[n]}{n}=\sum_{k=1}^\infty d(\bbS_R(\{p_k\},\{p_1,\dots,p_{k-1}\}))\\
        &\underset{\text{Lemma \ref{s}}}{=}\sum_{k=1}^\infty \left(d(\bbS_R)\frac{1}{N(p_k)+1}\prod_{i<k}\frac{N(p_i)}{N(p_i)+1}\right)\\
        &=\sum_{k=1}^\infty \left(d(\bbS_R)\left(1-\frac{N(p_k)}{N(p_k)+1}\right)\prod_{i<k}\frac{N(p_i)}{N(p_i)+1}\right)\\
        &=d(\bbS_R)\lim_{N\to\infty}\sum_{k=1}^N\left(\prod_{i<k}\frac{N(p_i)}{N(p_i)+1}-\prod_{i<k+1}\frac{N(p_i)}{N(p_i)+1}\right)\\
        &=d(\bbS_R)\lim_{N\to\infty}\left(1-\prod_{i\leq N}\frac{N(p_i)}{N(p_i)+1}\right)=d(\bbS_R)(1-\alpha).
    \end{align*}
    This implies that
    \begin{align*}
        d(\bbS_R(\emptyset,P))=d(\bbS_R)-d(\bbS_R\setminus \bbS_R(\emptyset,P))=d(\bbS_R)\alpha=d(\bbS_R)\prod_{p\in P}\frac{N(p)}{N(p)+1}.
    \end{align*}
    Applying Lemma \ref{u} with $T=\emptyset$ and $S=T$, we get
    \begin{align*}
        d(\bbS_R(T,P))&=\frac{d(\bbS_R(\emptyset,T\cup P))}{N(m_T)}=d(\bbS_R)\prod_{t\in T}\frac{1}{N(t)}\prod_{t\in T}\frac{N(t)}{N(t)+1}\prod_{p\in P}\frac{N(p)}{N(p)+1}\\
        &=d(\bbS_R)\prod_{t\in T}\frac{1}{N(t)+1}\prod_{p\in P}\frac{N(p)}{N(p)+1}.\qed
    \end{align*} 
\end{itemize}

\section{Proof of Theorem \ref{2}}
\subsection{Auxiliary Lemmas for Theorem \ref{2}}
\begin{lemma}\label{rd}
    For any $r,m\in R$ with $m\neq0$,
    \begin{align*}
        d(r+mR)=\frac{1}{N(m)}.
    \end{align*}
\end{lemma}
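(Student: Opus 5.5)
We prove $d(r+mR)=1/N(m)$ by a tiling argument: decompose $R$ into the $N(m)$ cosets of $mR$, show that each coset meets $F_n$ in asymptotically the same number of elements, and divide. Set $k=N(m)=|R/mR|$, which is finite since $m\neq 0$. Choose coset representatives $r_1,\dots,r_k$ with $r=r_1$, so that $R=\bigsqcup_{i=1}^k (r_i+mR)$. Then
\begin{align*}
|F_n|=\sum_{i=1}^k |(r_i+mR)\cap F_n|.
\end{align*}
It therefore suffices to show that for every pair $i,j$,
\begin{align*}
|(r_i+mR)\cap F_n|-|(r_j+mR)\cap F_n|=o(|F_n|).
\end{align*}
Then each term equals $|F_n|/k+o(|F_n|)$, and dividing by $|F_n|$ gives $d(r+mR)=1/N(m)$.

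For $R=\bbF_q[t]$ the argument is exact. Take $n$ with $q^{\deg m}\le n$, and let $r_i$ range over the polynomials of degree $<\deg m$. Then $F_n$ is the set of polynomials of degree $<d:=\lceil\log_q n\rceil$, with $d\ge\deg m$. Division with remainder gives a bijection
\begin{align*}
\{x\in\bbF_q[t]:\deg x<d\}\longleftrightarrow\{(s,u):\deg s<d-\deg m,\ \deg u<\deg m\},\qquad x=sm+u.
\end{align*}
Each residue class therefore contains exactly $q^{d-\deg m}=|F_n|/q^{\deg m}$ elements of $F_n$. This gives the ratio exactly $1/N(m)$ for all large $n$, with no asymptotics needed.

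For $R=\cO_D$ we compare translates, using the shift map $x\mapsto x+(r_j-r_i)$, which sends $r_i+mR$ onto $r_j+mR$.
\begin{itemize}
\item[(1)] Put $c=\max_i |r_i|$, where $|\cdot|=\sqrt{N(\cdot)}$ is the complex absolute value; this satisfies the triangle inequality.
\item[(2)] If $N(x)<n$, then $|x+(r_j-r_i)|<\sqrt n+2c$. Hence the shift map injects $(r_i+mR)\cap F_n$ into $(r_j+mR)\cap F_{(\sqrt n+2c)^2}$.
\item[(3)] This gives
\begin{align*}
|(r_i+mR)\cap F_n|\le |(r_j+mR)\cap F_n| + |F_{(\sqrt n+2c)^2}\setminus F_n|.
\end{align*}
\item[(4)] By Proposition~\ref{la}, $|F_{(\sqrt n+2c)^2}|$ and $|F_n|$ are both $\frac{C\pi}{\sqrt D}\,n+O(\sqrt n)$, with $C=1$ or $4$. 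So the boundary annulus in (3) has size $O(\sqrt n)=o(|F_n|)$, which is the claimed estimate.
\end{itemize}

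The main obstacle is this annulus estimate. One has to check that the error terms $(r\pm\sqrt{1+D})^2$ in Proposition~\ref{la} give only an $O(\sqrt n)$ discrepancy between the two radii. Expanding the squares does this: the leading $\pi r^2/\sqrt D$ terms cancel and the rest is linear in $r=\sqrt n$. A second point is that Proposition~\ref{la} counts $N(x)\le r^2$ rather than $N(x)<n$. Since $N(x)$ is an integer, $N(x)<n$ is the same as $N(x)\le n-1$, so this only changes $r^2$ by $1$ and is harmless.
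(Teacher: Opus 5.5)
Your proof is correct, but it takes a different route from the paper's.

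\textbf{The paper's route.} The paper never looks at the other cosets. It treats $r+mR$ as the image of $R$ under $x\mapsto r+mx$ and uses $N(mx)=N(m)N(x)$ together with the triangle inequality for $\sqrt{N}$.
\begin{itemize}
\item In $\bbF_q[t]$ this gives the exact identity $|(r+mR)\cap F_n|=|F_{nq^{-\deg m}}|$ for $n>q^{\deg r}$.
\item In $\cO_D$ it traps $|(r+mR)\cap F_n|$ between $|F_{(\sqrt n-\sqrt{N(r)})^2/N(m)}|$ and $|F_{(\sqrt n+\sqrt{N(r)})^2/N(m)}|$.
\end{itemize}
It then relies on the \emph{scaling} of the counting function, $|F_{s/N(m)}|/|F_s|\to 1/N(m)$, which comes from Lemma~\ref{rf} or from the linear growth in Proposition~\ref{la}.

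\textbf{Your route.} You read $N(m)=|R/mR|$ literally as the number of cosets and show the cosets are equidistributed. This needs only that $(F_n)$ is almost invariant under the finitely many translations by $r_j-r_i$. That is the F\o lner property the paper mentions but does not use here. Multiplication by $m$ and the multiplicativity of $N$ never enter.

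\textbf{What each buys.} Your argument is more structural: it applies essentially verbatim to any finite-index subgroup of an abelian group equipped with a F\o lner sequence. The paper's argument instead gives a direct description of the single coset as a rescaled copy of $F$.

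\textbf{Two small points.}
\begin{itemize}
\item In the $\bbF_q[t]$ case, say explicitly that you first replace $r$ by its remainder modulo $m$. This does not change the coset, and it removes the clash between $r=r_1$ and ``$r_i$ of degree $<\deg m$''. Your division-with-remainder count is then exact for all $n\ge q^{\deg m}$.
\item For $D\equiv 3\pmod 4$, the constant $C=4$ you take from Proposition~\ref{la} is really $2$. The reduction there to $\bbZ[\sqrt{-D}]$ ignores that only pairs $(a,b)$ with $a\equiv b\pmod 2$ occur. Your annulus bound only needs $|F_s|=cs+O(\sqrt s)$ for some $c>0$, so your argument is unaffected.
\end{itemize}
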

\begin{proof}
    Let $r,m\in R$ with $m\neq0$.
    \begin{itemize}
        \item[(i)] When $R=\bbF_q[t]$; suppose $n>q^{\deg(r)}$. If $x\in F_{nq^{-\deg(m)}}$, since $n>q^{\deg(r)},q^{\deg(mx)}$, then 
        \begin{align*}
            q^{\deg(r+mx)}\leq q^{\max\{\deg(r),\deg(mx)\}}<n,
        \end{align*}
        Thus, $r+mF_{nq^{-\deg(m)}}\subseteq F_n$, so $r+mF_{nq^{-\deg(m)}}\subseteq (r+m\bbF_q[t])\cap F_n$. If $r+mx\in F_n$, since $n>q^{\deg(r)}$, then $mx\in F_n$ and
        \begin{align*}
            q^{\deg(x)}=q^{\deg(mx)}q^{-\deg(m)}<nq^{-\deg(m)},
        \end{align*}
        so $x\in F_{nq^{-\deg(m)}}$. Thus, $|(r+m\bbF_q[t])\cap F_n|\leq |F_{nq^{-\deg(m)}}|$. Then we obtain
        \begin{align*}
            \frac{|F_{nq^{-\deg(m)}}|}{|F_n|}=\frac{|r+mF_{nq^{-\deg(m)}}|}{|F_n|}\leq \frac{|(r+m\bbF_q[t])\cap F_n|}{|F_n|}\leq\frac{|F_{nq^{-\deg(m)}}|}{|F_n|}.
        \end{align*}
        That is, $\frac{|(r+m\bbF_q[t])\cap F_n|}{|F_n|}=\frac{|F_{nq^{-\deg(m)}}|}{|F_n|}$ if $n>q^{\deg(r)}$. By Lemma \ref{rf}, 
        \begin{align*}
            d(r+m\bbF_q[t])=\lim_{n\to\infty}\frac{|(r+m\bbF_q[t])\cap F_n|}{|F_n|}=\frac{1}{q^{\deg(m)}}=\frac{1}{N(m)}.
        \end{align*}

        \item[(ii)] When $R=\cO_D$. Extend $F_n$, originally defined for $n\in\bbN$, to $n\in\bbR_{>0}$. 
        
        \noindent If $x=x_1+\sqrt{-D}x_2\in F_{(\sqrt{n}-\sqrt{N(r)})^2N(m)^{-1}}$, then
        \begin{align*}
            N(r+mx)&\leq \left(\sqrt{N(r)}+\sqrt{N(mx)}\right)^2=\left(\sqrt{N(r)}+\sqrt{N(m)}\sqrt{N(x)}\right)^2\\
            &<\left(\sqrt{N(r)}+\sqrt{n}-\sqrt{N(r)}\right)^2=n.
        \end{align*}
        Thus, $r+mF_{(\sqrt{n}-\sqrt{N(r)})^2N(m)^{-1}}\subseteq F_n$. If $r+mx\in F_n$, then
        \begin{align*}
            N(x)&=N(mx)N(m)^{-1}=N((r+mx)-r)N(m)^{-1}\\
            &\leq \left(\sqrt{N(r+mx)}+\sqrt{N(r)}\right)^2N(m)^{-1}\\
            &<(\sqrt{n}+\sqrt{N(r)})^2N(m)^{-1},
        \end{align*}
        so $x\in F_{(\sqrt{n}+\sqrt{N(r)})^2N(m)^{-1}}$. This implies that $|(r+m\cO_D)\cap F_n|\leq|F_{(\sqrt{n}+\sqrt{N(r)})^2N(m)^{-1}}|$. Then we obtain
        \begin{align*}
            \frac{|F_{(\sqrt{n}-\sqrt{N(r)})^2N(m)^{-1}}|}{|F_n|}&=\frac{|r+mF_{(\sqrt{n}-\sqrt{N(r)})^2N(m)^{-1}}|}{|F_n|}\\
            &\leq\frac{|(r+m\cO_D)\cap F_n|}{|F_n|}\leq\frac{|F_{(\sqrt{n}+\sqrt{N(r)})^2N(m)^{-1}}|}{|F_n|}.
        \end{align*}
        Using Lemma \ref{la}, one can show 
        \begin{align*}
            \lim_{n\to\infty}\frac{|F_{(\sqrt{n}-\sqrt{N(r)})^2N(m)^{-1}}|}{|F_n|}=\lim_{n\to\infty}\frac{|F_{(\sqrt{n}+\sqrt{N(r)})^2N(m)^{-1}}|}{|F_n|}=\frac{1}{N(m)}.
        \end{align*}
        Hence, $d(r+m\cO_D)=\frac{1}{N(m)}$.
    \end{itemize}
\end{proof}
\begin{lemma}\label{eq}
    Let $P\subseteq\bbI_R$ be infinite. Then $\sum_{p\in P}\frac{1}{N(p)}=\infty$ if and only if $\prod_{p\in P}\frac{N(p)}{1+N(p)}=0$. Moreover, if $\sum_{p\in P}\frac{1}{N(p)}\leq L<\infty$ for $L\in\bbR$, then $\prod_{p\in P}\frac{N(p)}{1+N(p)}\geq e^{-L}$.
\end{lemma}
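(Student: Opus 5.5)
Since $P$ is countable and every factor lies in $(0,1)$, I first fix an enumeration $p_1,p_2,\dots$ of $P$. I then work with the partial products $\alpha_n=\prod_{k\le n}\frac{N(p_k)}{1+N(p_k)}$. These decrease to a limit $\alpha\ge0$, and by the discussion in the proof of Theorem \ref{1}(ii) the limit does not depend on the enumeration. Taking logarithms, $\alpha=0$ exactly when $\sum_k \log\bigl(1+\tfrac{1}{N(p_k)}\bigr)=\infty$, because $-\log\alpha_n=\sum_{k\le n}\log\bigl(1+\frac1{N(p_k)}\bigr)$. So the whole lemma reduces to comparing the two series $\sum_k \log(1+x_k)$ and $\sum_k x_k$, where $x_k=\frac{1}{N(p_k)}\in(0,\tfrac12]$. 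The bound $x_k\le\tfrac12$ holds because every nonunit has norm at least $2$.

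The key elementary inequality I will use is
\begin{align*}
\frac{x}{1+x}\le \log(1+x)\le x\qquad (x>0),
\end{align*}
together with the consequence that $\log(1+x)\ge \frac{x}{1+x}\ge \frac{2x}{3}$ for $0<x\le\tfrac12$. This gives
\begin{align*}
\tfrac23\sum_{k\le n}x_k\le -\log\alpha_n\le\sum_{k\le n}x_k .
\end{align*}
Letting $n\to\infty$, $-\log\alpha_n\to\infty$ if and only if $\sum_k x_k=\infty$. That is, $\alpha=0$ if and only if $\sum_{p\in P}\frac1{N(p)}=\infty$, which is the first assertion. Sums of positive terms are unaffected by rearrangement, so the choice of enumeration does not matter on the sum side either.

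For the quantitative statement, assume $\sum_{p\in P}\frac{1}{N(p)}\le L$. The upper inequality gives $-\log\alpha_n\le\sum_{k\le n}x_k\le L$ for every $n$, so $\alpha_n\ge e^{-L}$. Passing to the limit yields $\prod_{p\in P}\frac{N(p)}{1+N(p)}=\alpha\ge e^{-L}$.

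There is no real obstacle here. The only care needed is to justify the logarithm bounds, which follow from the mean value theorem or from concavity of $\log$. The claim $N(p)\ge2$ for irreducible $p$ is used only for the lower comparison constant, and any positive constant would suffice there. A convenient alternative is to use $\log(1+x)\ge x/(1+x)\ge x/2$, which holds for all $x\le1$.
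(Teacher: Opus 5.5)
Your proof is correct and is essentially the paper's argument: the paper's proof is the single sandwich $\frac12\sum_{p\in P}\frac{1}{N(p)}\le -\log\prod_{p\in P}\frac{N(p)}{1+N(p)}=\sum_{p\in P}\log\bigl(1+\frac{1}{N(p)}\bigr)\le\sum_{p\in P}\frac{1}{N(p)}$, which is your comparison with the constant $\frac12$ (your suggested alternative) in place of $\frac23$. You additionally spell out the passage from partial products to the limit and the independence from the enumeration, which the paper leaves implicit.
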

\begin{proof}
    It follows from
    \begin{align*}
        \frac{1}{2}\sum_{p\in P}\frac{1}{N(p)}\leq-\log\prod_{p\in P}\frac{N(p)}{1+N(P)}=\sum_{p\in P}\log\frac{1+N(p)}{N(p)}\leq\sum_{p\in P}\frac{1}{N(p)}.
    \end{align*}
\end{proof}

\begin{lemma}\label{app}
    Let $A\subseteq R\setminus\{0\}$ with $d(A)>0$. Then 
    \begin{align*}
        \sum_{p\in A}\frac{1}{N(p)}=\infty.
    \end{align*}
\end{lemma}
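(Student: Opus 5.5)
We argue by contradiction, using the growth of $|F_n|$ established in Lemma \ref{rf} and Proposition \ref{la}. The idea is that a set of positive density must contain many elements in every dyadic norm shell, and each shell then contributes a fixed positive amount to $\sum 1/N(p)$.

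First record the size of the counting sets. For $R=\bbF_q[t]$ one has $|F_{q^k}|=q^k$. For $R=\cO_D$, Proposition \ref{la} gives $|F_n|\asymp n$ for large $n$. In both cases there are constants $c_1,c_2>0$ with $c_1 n\le |F_n|\le c_2 n$ for all large $n$ along the relevant sequence. For $\bbF_q[t]$ we use $n=q^k$ and shells $\{q^{k-1}\le N(x)<q^k\}$, but the argument is the same, so we work with dyadic scales $n=2^k$ (or $q^k$).

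Suppose $\sum_{p\in A}1/N(p)<\infty$, and set $\delta=\underline d(A)>0$; this is all that positivity of the density needs to supply. Then for all large $k$,
\begin{align*}
|A\cap F_{2^{k}}|\ge \tfrac{\delta}{2}|F_{2^k}|\ge \tfrac{\delta c_1}{2}2^k.
\end{align*}
Next use Lemma \ref{ro} (or Lemma \ref{rf}) with a large fixed multiplier $K$. The initial piece satisfies
\begin{align*}
|A\cap F_{2^{k}/K}|\le |F_{2^k/K}|\approx |F_{2^k}|/K.
\end{align*}
Choose $K$ so that $1/K<\delta/4$. Then the shell $A\cap(F_{2^k}\setminus F_{2^k/K})$ contains at least $\tfrac{\delta}{4}|F_{2^k}|\ge c\,2^k$ elements. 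Each of these has $N(p)<2^k$, so the shell contributes at least $c\,2^k/2^k=c>0$ to $\sum_{p\in A}1/N(p)$.

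Finally, take scales $k_j$ spaced far enough apart that the shells $[2^{k_j}/K,2^{k_j})$ are disjoint, for instance $2^{k_{j+1}}/K\ge 2^{k_j}$. Summing the contributions over $j$ gives infinitely many disjoint contributions of size at least $c$, so the series diverges, which contradicts the assumption.

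The main obstacle is the bookkeeping in the $\cO_D$ case. There the ratio $|F_{n/K}|/|F_n|\to 1/K$ holds only asymptotically, by Lemma \ref{ro} applied with $n/K$ in place of $n$, and this requires passing to real arguments as in the proof of Lemma \ref{rd}. It should be handled via the explicit bounds of Proposition \ref{la}, which give $|F_{n/K}|\le \tfrac{2}{K}|F_n|$ for large $n$; this is enough once $K$ is chosen with $2/K<\delta/4$.
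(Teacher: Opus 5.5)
Your proof is correct. Its core is the paper's: cut $R$ into norm shells, show that $A$ fills a fixed proportion of each shell, and use $1/N(p)>1/(\text{outer norm})$ so that each shell contributes a fixed positive amount.

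The bookkeeping differs in a way that matters. The paper uses consecutive shells $F_{4^{k+1}}\setminus F_{4^k}$ and controls the inner ball by $\frac{3d(A)}{2}|F_{4^k}|$. So it needs two-sided bounds from the existence of $d(A)$, and it needs $|F_{4^{k+1}}|/|F_{4^k}|$ to stay above $3$. In exchange it needs no selection of scales, and it works when $|F_{4n}|/|F_n|\to 4$, as in $\cO_D$.

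You instead fix a large ratio $K$, bound $|A\cap F_{n/K}|$ trivially by $|F_{n/K}|\le\frac{C}{K}|F_n|$, and pass to sparse disjoint shells. This uses only $c_1n\le|F_n|\le c_2n$ with arbitrary constants, so nothing depends on the precise constants in Proposition \ref{la}. It also uses only $\underline{d}(A)>0$. Running the same argument along sparse scales $n$ with $|A\cap F_n|\ge\frac12\overline{d}(A)|F_n|$ shows that positive upper density already suffices.

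This robustness pays off in $\bbF_q[t]$, where $|F_n|=q^{\lceil\log_q n\rceil}$ is a step function. There $|F_{4^{k+1}}|/|F_{4^k}|$ is a power of $q$; it equals $3$ infinitely often when $q=3$, and equals $1$ (an empty shell) infinitely often when $q\ge 5$. The paper's estimate in case (i) appears to work only because it computes with $|F_{4^{k+1}}|=q^{4^{k+1}}$, which is not the size of $F_{4^{k+1}}$. Your scales $q^k$ with $K$ a power of $q$, where Lemma \ref{rf} gives the ratio exactly, avoid this entirely.

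One small simplification in the $\cO_D$ case: take $K$ a power of $2$. Then $2^k/K\in\bbN$ and Lemma \ref{ro} applies directly, without extending $F_n$ to real indices.
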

\begin{proof} Pick $M_0\in\bbN$ such that 
    \begin{align*}
        \frac{d(A)}{2}\leq\frac{|F_n\cap A|}{|F_n|}\leq \frac{3d(A)}{2}\quad\forall n\geq M_0.
    \end{align*}
If $k\geq\log_4M_0$, then
        \begin{align*}
            |A\cap(F_{4^{k+1}}\setminus F_{4^k})|&=|A\cap F_{4^{k+1}}|-|A\cap F_{4^k}|\geq\frac{d(A)}{2}|F_{4^{k+1}}|-\frac{3d(A)}{2}|F_{4^k}|.
        \end{align*}
    \begin{itemize}
        \item[(i)] When $R=\bbF_q[t]$; for each $k\in\bbN$, since $q\geq2$, or $q^4\geq16$,
        \begin{align*}
            \frac{\frac{d(A)}{2}|F_{4^{k+1}}|-\frac{3d(A)}{2}|F_{4^k}|}{q^{4^{k+1}}}=\frac{\frac{d(A)}{2}q^{4^{k+1}}-\frac{3d(A)}{2}q^{4^k}}{q^{4^{k+1}}}=\frac{d(A)}{2}\left(1-\frac{3}{q^4}\right)\geq\frac{d(A)}{8}.
        \end{align*}
        Then we obtain
        \begin{align*}
            \sum_{a\in A}\frac{1}{q^{\deg(a)}}&\geq\sum_{k=\left\lceil\log_4M_0\right\rceil}^\infty\sum_{a\in A\cap(F_{4^{k+1}}\setminus F_{4^k})}\frac{1}{q^{4^{k+1}}}\\
            &=\sum_{k=\left\lceil\log_4M_0\right\rceil}^\infty\frac{|A\cap(F_{4^{k+1}}\setminus F_{4^k})|}{q^{4^{k+1}}}\geq\sum_{k=\left\lceil\log_4M_0\right\rceil}^\infty\frac{d(A)}{8}=\infty.
        \end{align*}
        
        \item[(ii)] When $R=\cO_D$; pick $M\in\bbN$ with $M\geq\log_4M_0$ such that $\sqrt{2}\cdot4^M>4^M+\sqrt{1+D}$. 
        
        \noindent For $k\geq M$, by Proposition \ref{la},
        \begin{align*}
            \frac{\frac{\overline{d}(A)}{2}|F_{4^{k+1}}|-\frac{3\overline{d}(B)}{2}|F_{4^k}|}{|4^{k+1}|^2}&\geq\frac{\frac{\overline{d}(B)}{2}\frac{\pi(4^{k+1}-\sqrt{1+D})^2}{\sqrt{D}}-\frac{3\overline{d}(B)}{2}\frac{\pi(4^k+\sqrt{1+D})^2}{\sqrt{D}}}{16^{k+1}}\\
            &\geq\frac{\frac{\overline{d}(B)}{2}\frac{\pi(4^{k+1}-4^k)^2}{\sqrt{D}}-\frac{3\overline{d}(B)}{2}\frac{\pi(\sqrt{2}\cdot4^k)^2}{\sqrt{D}}}{16^{k+1}}=\frac{3\pi\overline{d}(B)}{32\sqrt{D}}.
        \end{align*}
        Then we obtain
        \begin{align*}
            \sum_{b\in B}\frac{1}{|b|^2}&\geq\sum_{k=M}^\infty\sum_{b\in B\cap(F_{4^{k+1}}\setminus F_{4^k})}\frac{1}{|4^{k+1}|^2}\\
            &=\sum_{k=M}^\infty\frac{|B\cap(F_{4^{k+1}}\setminus F_{4^k})|}{16^{k+1}}\geq\sum_{k=M}^\infty\frac{3\pi\overline{d}(B)}{32\sqrt{D}}=\infty.
        \end{align*}
    \end{itemize}
\end{proof}

\subsection{Proof of Theorem \ref{2}} Let $P=r+mR$. The density of the set of squarefree elements divisible by no prime in $P$ is 
\begin{align*}
    d(\bbS_R(\emptyset,P))=d(\bbS_R)\prod_{p\in P}\frac{N(p)}{1+N(p)}
\end{align*}
by Theorem \ref{1}. Since $d(P)=d(r+mR)=\frac{1}{N(m)}>0$ by  Lemma \ref{rd}, it follows from Lemma \ref{app} that $\sum_{p\in P}\frac{1}{N(p)}=\infty$. Hence, $\prod_{p\in P}\frac{N(p)}{1+N(p)}=0$ by Lemmas \ref{eq}. \qed

\section{Mersenne Irreducibles} The \emph{Mersenne prime} in $\bbN$ is of the form 
\begin{align*}
    p=2^n-1=\sum_{k=0}^{n-1}2^k.
\end{align*}
\begin{definition}[Mersenne irreducible] Let $p\in\bbI_R$.
    \begin{itemize}
        \item $p\in\bbI_{\bbF_q[t]}$ is a \emph{Mersenne irreducible} if $p=\sum_{k=0}^{n-1}t^k$ for some $n\in\bbN$. 

        \item $p\in\bbI_{\cO_D}$ is a \emph{Mersenne irreducible} if $p=\sum_{k=0}^{n-1}(1+\sqrt{-D})^k$ for some $n\in\bbN$. 
    \end{itemize}
    Let $\cM_R$ denote the set of Mersenne irreducibles.
\end{definition}
For any $P\subseteq\bbI_R$ and any $M\in\bbN$, let
\begin{align*}
    P_M&=\begin{cases}
        \left\{p\in P:p=\sum_{k=0}^{l-1}t^k\text{ for some }l\leq M\right\}&\text{if $R=\bbF_q[t]$}\\
        \left\{p\in P:\sum_{k=0}^{l-1}(1+\sqrt{-D})^k\text{ for some }l\leq M\right\}&\text{if $R=\cO_D$}
    \end{cases}\subseteq\cM_R;\\
    A_M&=\prod_{p\in P_M}\frac{N(p)}{1+N(p)}.
\end{align*}
\begin{lemma}\label{p2}
    Let $a\in\bbR$. Let $P=\{p_1,p_2,\dots\}\subseteq\bbI_R$ be infinite. Suppose that there are $M\in\bbN$ and $b,\rho\in\bbR_{>0}$ with $\rho>1$ such that 
    \begin{align*}
        N(p_k)\geq b\rho^k-a\quad\forall k\geq M.
    \end{align*}
    Then the set of squarefree elements not divisible by any element of $P$ has positive density.
\end{lemma}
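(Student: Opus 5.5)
By Theorem \ref{1}, applied with $T=\emptyset$, we have
\begin{align*}
d(\bbS_R(\emptyset,P))=d(\bbS_R)\prod_{p\in P}\frac{N(p)}{1+N(p)}.
\end{align*}
Proposition \ref{sqf} gives $d(\bbS_R)=1/\zeta_R(2)>0$, so it suffices to show that the infinite product is strictly positive. By Lemma \ref{eq}, the product is positive exactly when $\sum_{p\in P}\frac{1}{N(p)}<\infty$, and in that case it is at least $e^{-L}$ for any finite upper bound $L$ of the sum. The whole task therefore reduces to proving convergence of $\sum_k 1/N(p_k)$ from the growth hypothesis $N(p_k)\ge b\rho^k-a$ for $k\ge M$.

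First I dispose of finitely many terms. Each $p_k$ is irreducible, hence a nonzero nonunit, so $N(p_k)\ge 2$. Thus $1/N(p_k)\le 1/2$, and the finitely many terms with $k<M'$ (for any fixed $M'$) contribute a finite amount. This also handles the fact that $b\rho^k-a$ may be nonpositive for small $k$.

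Next I choose $M'\ge M$ so large that $b\rho^k\ge 2a$ for all $k\ge M'$. This is possible since $\rho>1$. If $a\le 0$, any $M'\ge M$ works, and we get directly $N(p_k)\ge b\rho^k$. In either case, for $k\ge M'$,
\begin{align*}
N(p_k)\ge b\rho^k-a\ge \tfrac{b}{2}\rho^k,
\end{align*}
so $1/N(p_k)\le \frac{2}{b}\rho^{-k}$. Summing the geometric series gives
\begin{align*}
\sum_{k\ge M'}\frac{1}{N(p_k)}\le \frac{2}{b}\cdot\frac{\rho^{-M'}}{1-\rho^{-1}}<\infty.
\end{align*}
Adding the finite head, $\sum_{p\in P}1/N(p)\le L$ for an explicit finite $L$. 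Lemma \ref{eq} then yields $\prod_{p\in P}\frac{N(p)}{1+N(p)}\ge e^{-L}>0$, and hence $d(\bbS_R(\emptyset,P))\ge d(\bbS_R)e^{-L}>0$.

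The main point of care is bookkeeping rather than a genuine obstacle. The sum over $p\in P$ must agree with the sum over the indices $k$, which holds provided the enumeration $p_1,p_2,\dots$ lists distinct elements. We also need $a$ allowed to be negative, which the case split above handles. Finally, the appeal to Theorem \ref{1} for infinite $P$ presumes the listing there; since the terms are positive, the product is independent of ordering, as noted in the proof of Theorem \ref{1}.
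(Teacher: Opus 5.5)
Your proof is correct and follows the same route as the paper: bound $\sum_{p\in P}1/N(p)$ by a finite head plus a geometric tail, then apply Lemma \ref{eq} and Theorem \ref{1} to get $d(\bbS_R(\emptyset,P))\ge d(\bbS_R)e^{-L}>0$. You are more careful than the paper in two places. Choosing $M'\ge M$ with $b\rho^k\ge 2a$ repairs the paper's tail bound $\sum_{k\ge M}1/(b\rho^k-a)$, which silently assumes $b\rho^k-a>0$ for every $k\ge M$. You also keep the factor $d(\bbS_R)$ that the paper's displayed equality drops.
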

\begin{proof}
    Since $N(p_k)\geq b\rho^k-a$ for all $k\geq M$,
    \begin{align*}
        \sum_{p\in P}\frac{1}{N(p)}=\sum_{k=1}^\infty\frac{1}{N(p_k)}\leq \sum_{k=1}^{M-1}\frac{1}{N(p_k)}+\sum_{k=M}^\infty\frac{1}{b\rho^{k}-a}=L<\infty,
    \end{align*}
    and by Lemma \ref{eq} and Theorem \ref{1}, it follows that 
    \begin{align*}
        d(\bbS_R(\emptyset,P))=\prod_{p\in P}\frac{N(P)}{1+N(p)}\geq e^{-L}>0.
    \end{align*}
    Hence, the set of squarefree elements not divisible by any element of $P$ has positive density.
\end{proof}
\begin{remark}
    Lemma \ref{p2} implies that if $P\subseteq\cM_R$ is infinite, then $d(\bbS_R(\emptyset,P))>0$.
    \begin{itemize}
        \item Every $p\in\cM_{\bbF_q[t]}$ has the form $p=\sum_{k=0}^{n-2}t^k$ for some $n\in\bbN$. Then $\deg(p)=n-1$, so $N(p)=q^{n-1}=\frac{1}{q}q^{n-1}$.

        \item Every $p\in\cM_{\cO_D}$ has the form 
        \begin{align*}
            p=\sum_{k=0}^{n-1}(1+\sqrt{-D})^k=\frac{(1+\sqrt{-D})^n-1}{\sqrt{-D}}
        \end{align*}
        for some $n\in\bbN$. Pick $M\in\bbN$ such that $(1+D)^{\frac{M}{2}}-1\geq (1+D)^{\frac{M-1}{2}}$. For $n\geq M$,
            \begin{align}
            \begin{matrix}
                N(p)=N\left(\frac{(1+\sqrt{-D})^n-1}{\sqrt{-D}}\right)=\frac{N((1+\sqrt{-D})^n-1)}{N(\sqrt{-D})}\hspace{50pt}\\
            \geq\frac{\left(\sqrt{N((1+\sqrt{-D})^n)}-\sqrt{N(1)}\right)^2}{D}=\frac{(\sqrt{(1+D)^n}-1)^2}{D}\\
            \geq\frac{(1+D)^{n-1}}{D}=\frac{1}{D(1+D)}(1+D)^n,\hspace{55pt}
            \end{matrix}\label{ub}
        \end{align}
        using the reverse triangle inequality $|x-y|\geq||x|-|y||$.
    \end{itemize}
\end{remark}
\begin{theorem}\label{3}
    For any $P\subseteq\bbI_R$ and $M\in\bbN$, one has
    \begin{align*}
        A_M\geq\prod_{p\in P}\frac{N(p)}{1+N(p)}\geq \begin{cases}
            A_M\exp\left(\frac{1}{(q-1)q^{M-1}}\right)&\text{if $R=\bbF_q[t]$};\\
            A_M\exp\left(\frac{1}{(1+D)^{M-1}}\right)&\text{if $R=\cO_D$}.
        \end{cases}
    \end{align*}
\end{theorem}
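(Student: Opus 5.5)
The plan is to split the product over $P$ into the finitely many factors indexed by $P_M$ and a tail, and to bound the tail from below using Lemma \ref{eq}. I read the statement with $P\subseteq\cM_R$, which is the setting in which $P_M$ and $A_M$ were defined. I also read the exponent in the lower bound as negative, i.e.\ $A_M\exp\bigl(-\frac{1}{(q-1)q^{M-1}}\bigr)$ and $A_M\exp\bigl(-\frac{1}{(1+D)^{M-1}}\bigr)$. With a positive exponent the lower bound would exceed $A_M$, contradicting the upper bound, so the sign in the statement appears to be a typo.

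First, write
\begin{align*}
\prod_{p\in P}\frac{N(p)}{1+N(p)}=A_M\cdot\prod_{p\in P\setminus P_M}\frac{N(p)}{1+N(p)}.
\end{align*}
The left-hand product is well defined, as in part (ii) of the proof of Theorem \ref{1}. Every factor lies in $(0,1)$, so the second product is at most $1$, which gives the upper bound $A_M\geq\prod_{p\in P}\frac{N(p)}{1+N(p)}$ at once.

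For the lower bound, apply the right-hand inequality in the proof of Lemma \ref{eq}, namely $-\log\prod\frac{N(p)}{1+N(p)}\leq\sum\frac{1}{N(p)}$, to the tail $P\setminus P_M$. This gives
\begin{align*}
\prod_{p\in P\setminus P_M}\frac{N(p)}{1+N(p)}\geq\exp\Bigl(-\sum_{p\in P\setminus P_M}\frac{1}{N(p)}\Bigr).
\end{align*}
It remains to bound the tail sum. Each $p\in P\setminus P_M$ equals $\sum_{k=0}^{l-1}x^k$ with $x=t$ or $x=1+\sqrt{-D}$ and some $l>M$, and distinct $p$ give distinct $l$. So the sum is dominated by a sum over $l\geq M+1$.

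For $R=\bbF_q[t]$ we have $N(p)=q^{l-1}$, hence
\begin{align*}
\sum_{l\geq M+1}q^{-(l-1)}=\frac{q^{-M}}{1-q^{-1}}=\frac{1}{(q-1)q^{M-1}}.
\end{align*}
For $R=\cO_D$, use the estimate \eqref{ub} from the Remark, $N(p)\geq(1+D)^{l-1}/D$. This gives
\begin{align*}
\sum_{l\geq M+1}D(1+D)^{-(l-1)}=\frac{D(1+D)^{-M}}{1-(1+D)^{-1}}=(1+D)^{1-M},
\end{align*}
which is exactly the claimed exponent.

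The main obstacle is that \eqref{ub} was only derived for $n$ beyond a threshold, defined by $(1+D)^{M/2}-1\geq(1+D)^{(M-1)/2}$. I would first try to show that this threshold is harmless. Since $1+D\geq2$, the inequality $\sqrt{(1+D)^n}-1\geq\sqrt{(1+D)^{n-1}}$ amounts to $(\sqrt{1+D}-1)(1+D)^{(n-1)/2}\geq1$. This holds for all $n\geq1$ when $D\geq3$, and for $n\geq2$ or $n\geq3$ when $D=1,2$. The finitely many small $l$ would be checked by hand, for instance by noting that they are either not irreducible or are absorbed into $P_M$ when $M$ is at least that small threshold. If some small cases genuinely fail, I would state the $\cO_D$ bound for $M$ beyond the threshold.
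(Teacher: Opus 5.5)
Your proposal is correct and follows the paper's proof: factor out $A_M$, bound the tail product below by $\exp\bigl(-\sum_{p\in P\setminus P_M}1/N(p)\bigr)$ via Lemma \ref{eq}, and sum the geometric series over $l>M$. Your three corrections (the negative exponent, the hypothesis $P\subseteq\cM_R$, and the threshold in (\ref{ub})) are all needed, since the paper misprints or leaves these implicit. There is one small slip: for $D=1$ the crude condition $\sqrt{(1+D)^l}-1\geq\sqrt{(1+D)^{l-1}}$ needs $l\geq4$, not $l\geq3$. The direct checks $N(2+i)=5\geq2$ and $N(2+3i)=13\geq4$ cover $l=2,3$, so (\ref{ub}) holds for every Mersenne irreducible and no restriction on $M$ is needed.
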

\begin{proof} Note that
    \begin{align*}
        A_M&\geq\prod_{p\in P}\frac{N(p)}{1+N(p)}=\prod_{p\in P_M}\frac{N(p)}{1+N(p)}\prod_{p\in P\setminus P_M}\frac{N(p)}{1+N(p)}\geq A_M\prod_{p\in P\setminus P_M}\frac{N(p)}{1+N(p)}.
    \end{align*}
    So, it suffices to show that for any $M\in\bbN$,
    \begin{align*}
        \prod_{p\in P\setminus P_M}\frac{N(p)}{1+N(p)}\geq \begin{cases}
            \exp\left(\frac{1}{(q-1)q^{M-1}}\right)&\text{if $R=\bbF_q[t]$};\\
            \exp\left(\frac{1}{(1+D)^{M-1}}\right)&\text{if $R=\cO_D$}.
        \end{cases}
    \end{align*}
    \begin{itemize}
        \item When $R=\bbF_q[t]$.
        \begin{align*}
        \sum_{p\in P\setminus P_M}\frac{1}{N(p)}&\leq\sum_{n>M}\frac{1}{N\left(\sum_{k=0}^{n-1}t^k\right)}=\sum_{n>M}\frac{1}{q^{n-1}}=\frac{\frac{1}{q^M}}{1-\frac{1}{q}}=\frac{1}{(q-1)q^{M-1}}.
    \end{align*}
    By Lemma \ref{eq}, we obtain 
    \begin{align*}
        \prod_{p\in P_M}\frac{N(p)}{1+N(p)}\geq \exp\left(\frac{1}{(q-1)q^{M-1}}\right).
    \end{align*}
    
        \item When $R=\cO_D$; using the inequality \ref{ub}: $N(p)\geq\frac{(1+D)^{n-1}}{D}$, we obtain
        \begin{align*}
        \sum_{p\in P\setminus P_M}\frac{1}{N(p)}&\leq\sum_{n>M}\frac{D}{(1+D)^{n-1}}=\frac{\frac{D}{(1+D)^{M}}}{1-\frac{1}{1+D}}=\frac{1}{(1+D)^{M-1}}.
    \end{align*}
    By Lemma \ref{eq}, we obtain
     \begin{align*}
        \prod_{p\in P\setminus P_M}\frac{N(p)}{1+N(p)}\geq \exp\left(\frac{1}{(1+D)^{M-1}}\right).
    \end{align*}
    \end{itemize}
\end{proof}
Theorem \ref{3} allows us to approximate the density of $\bbS_R(\emptyset,P)$.
   \quad



\begin{thebibliography}{9}
\bibitem{b} Brown, R. \emph{The natural density of some sets of square-free numbers}. Integers, \textbf{21}, 2021.

\bibitem{cv} Cellarosi, F., Vinogradov, I. \emph{Ergodic properties of $k$-free integers in number fields}. Journal of Modern Dynamics, \textbf{7}(3), 461--488, 2013.


\bibitem{c} Chandrasekharan, K. \emph{Introduction to Analytic Number Theory}. Springer Science \& Business Media, 2012.

\bibitem{ha} Halberstam, H. \emph{Gaps in integer sequences}, Mathematics Magazine, \textbf{56}(3), 131--140, 1983.

\bibitem{h} Hofbauer, J. \emph{A simple proof of and related identities}. The American mathematical monthly, \textbf{109}(2), 196--200, 2002.

\bibitem{j} Jameson, G. J. O. \emph{Even and odd square-free numbers}, The Mathematical Gazette \textbf{94}, 123--127, 2010.


\bibitem{nj} Neukirch, J. Algebraic number theory. Springer Science \& Business Media, \textbf{322}, 2013.

\bibitem{r} Rosen, M. \emph{Number theory in function fields}.  Springer Science \& Business Media, \textbf{210}, 2013.



\end{thebibliography}
\end{document}